\newtheorem{thm}{Theorem}
\newtheorem{prop}{Proposition}
\newtheorem{definition}{Definition}[section]
\newcommand{\icol}[1]{
\left(\begin{smallmatrix}#1\end{smallmatrix}\right)
}
\newcommand\smallO{
  \mathchoice
    {{\scriptstyle\mathcal{O}}}
    {{\scriptstyle\mathcal{O}}}
    {{\scriptscriptstyle\mathcal{O}}}
    {\scalebox{.7}{$\scriptscriptstyle\mathcal{O}$}}
  }
\title{Traveling salesman problem with time slots: Asymptotic analysis and resolution algorithm }
\author{
 Omar Rifki \\
  Univ. Littoral Cote d’Opale, \\LISIC, F-62100 Calais, France. \\
  \texttt{rifki@univ-littoral.fr} \\
   \And
Thierry Garaix \\
  Mines Saint-Etienne,\\ Univ Clermont Auvergne,\\ 
  CNRS, UMR 6158 LIMOS, \\CIS, Departement I4S, F-42023,\\
  Saint-Etienne, France.\\
  \texttt{garaix@emse.fr} \\
}
\begin{document}
\maketitle
\begin{abstract}
We develop an asymptotic approximation and bounds for the traveling salesman problem with time slots, {\em i.e.} when the time windows of points to visit are a partition of a given time horizon.
Although this problem is relevant in several delivery applications, operational research researchers did not pay close attention to it, contrary to the extensively studied general formulation with time windows.
Exploiting the specificity of this problem allows to solve more efficiently instances which may be hard to solve by specifically designed algorithms for the traveling salesman problem with time windows. 
The asymptotic analysis of the traveling salesman problem with time slots is a step toward developing new approximations for the general problem with time windows.
We discuss this case.
We equally provide a formulation of the asymptotic approximation under a worst-case demand distribution of the points to visit, based on the principle of the maximum entropy.
Computational results are given for the benchmarks of the literature for which the time slots are randomly generated.
\end{abstract}


\keywords{Asymptotic analysis \and Traveling salesman problem  \and Time windows \and Solving algorithm}

\section{Introduction}
One of the main issues in transport logistics is computing optimal Hamiltonian cycles of a number of points, an NP-hard problem famously known as the Traveling Salesman Problem (TSP).
When the number of points to visit increases, the practical solving becomes harder, even using heuristic approaches.
On the other hand, in a large number of situations, especially those involving decisions on the strategical and the tactical level about the design of routing problems, only knowing the optimal tour length is needed, not the  tour in itself. 
Having a continuous approximation, which is a closed-form formula, easily computed, of the optimal tour length can be highly beneficial in those cases.
The strategical and the tactical decisions could be made quickly without running any optimization.
For instance, the continuous approximations are used by postal services for districting and sizing territories, as in the case of the United States postal service, see \cite{rosenfield1992application, novaes2000continuous, lei2015dynamic}.
The sizing and the composition of the fleet is another tactical decision which could rely on the approximations of tour length, {\em e.g.} \cite{nourinejad2017continuous, franceschetti2017strategic}. 
In case of  the location routing problem, which is a difficult problem composed by two NP-hard problems: a facility location and a vehicle routing problem, the solution of the routing tour could be approximated by an asymptotic approximation, {\em e.g.} \cite{drexl2015survey}.
Continuous approximations can be also useful in situations where the location of the points to visit is not known in advance, as the formulas rely on probabilistic assumptions.

Logistic and freight distribution have often time-related constraints, which restrict the visit of each point in time to an interval. 
These time windows constraints can be imposed either by authorities, to limit the access for freight vehicles to city centers \cite{akyol2018determining} or to freight loading zones for instance, or by customers for delivery, commercial and transportation operations, or by patients in case of medical transport. 
Their application range is wide.
In general, time windows have a significant impact on reducing the efficiency of routes and increasing the traveled distances of vehicles \cite{figliozzi2007analysis}.
We are interested in a specific format of time windows: non-overlapping time windows which form a partition of a planning horizon. 
We term them time slots.
This structure becomes increasingly present in logistics planning due to the boom of e-commerce services and on-demand businesses, such as online grocery stores \cite{hungerlander2018solving}.
Actually, in order to increase customer satisfaction from one side, and the flexibility of delivery operations from the company side, these companies pre-arrange wide time slots for customers to choose from, such that the service is guaranteed for each customer in the chosen slot.
The union of the time slots covers a large interval along the day, which could offer the customer the ability to choose the most adequate slot.
Having the same temporal structure to construct day to day tours is also easier from the supplier side, which is not obliged to handle the cumbersome task of managing customers time windows that may differ in structure on a daily basis.  

The main goal of this paper is to extend the continuous asymptotic approximation of the TSP to the TSP with time slots (TSP-TS).
With time slot constraints, it is obvious that the maximum tour length is bounded by the maximum difference between the time windows. 
However, an approximation function make it possible to approach how the tour length evolves according to the number of customers and also to apprehend the feasibility criteria of the tour. In this context, the main difficulty is to simultaneously manage the geographical and the temporal distributions.
The contributions of this paper are listed as follows:
\begin{itemize}
    \item[-] Proposition of an asymptotic approximation, feasibility conditions, and asymptotic bounds for the TSP-TS in case of uniform temporal and spacial distributions;
    \item[-] Extension of this approximation to the worst-case temporal and spacial demands of customers;
    \item[-] Proposition of an exact solving approach for the TSP-TS;
    \item[-] Generation of a dataset benchmark for the TSP-TS.
\end{itemize}

The remaining of the article is organized as follows. 
Section 2 presents a brief literature review on the topic of continuous approximations in routing problems. 
Section 4 introduces the proposed asymptotic approximation and bounds accounting for the time windows consideration, while the preliminaries of the study are stated in Section 3. 
The worst-case demand in terms of space and time is treated in Section 5.
The solving approach is given in Section 6, while
Section 7 provides computational results. 
The paper is concluded thereafter. 

\section{Literature review}
The approximation of the routing problem is grounded on the famous theorem of Beardwood, Halton, and Hammersley (BHH), published in 1959 \cite{beardwood1959shortest}. 
This result states that when the number of points to visit is randomly distributed on a compact area and goes to infinity, the optimal tour length approaches a constant value. 
Noting that the BHH formula underestimates tour lengths in elongated areas even for a larger number of points, Daganzo \cite{daganzo1984length} proposed a strip strategy method, which computes optimized tour length in those types of areas.
The two models of \cite{beardwood1959shortest, daganzo1984length} gave rise to several extensions accounting for the variants of the transportation problems, and accommodating to the area's shape.
Chien \cite{chien1992operational} through a regression model accounted for the rectangular shape area in the TSP approximation by including the area of the smallest rectangle enclosing all points and the average distance to the depot.
Kwon \cite{kwon1995estimating} via regression as well and neural networks improved the TSP approximation by including the length to width ratio of the rectangle and a shape factor. 
The first approximations of the capacitated vehicle routing problem (CVRP) were proposed by Webb \cite{webb1968cost}.
Eilon {\em et al.} proposed a similar formula to the BHH for the CVRP accounting for the distances between the depot and the customers and for the shape of the support area. 
An intuitive approximation was equally proposed by Daganzo \cite{daganzo1984distance} for the CVRP.
BHH formula has also led to the development of solving heuristics such as `Partition' \cite{karp1977probabilistic}.
For a review of the overall extensions deriving from the continuous asymptotic approximation of the TSP, see \cite{franceschetti2017continuous} and \cite{ansari2018advancements}.

Continuous approximations of the routing with time-windows concern mainly the vehicle routing problem (VRP).
Daganzo \cite{daganzo1987modeling1, daganzo1987modeling2} has developed a model wherein the day is divided into time periods and customers into rectangles. 
Using a cluster-first route-second method, he obtained an approximation for the total distance traveled by all vehicles under these considerations.
Figliozzi \cite{figliozzi2009planning} tested several VRP approximations, and proposed a probabilistic modeling of the approximation such that the number of routes for a given number of time windows is derived probabilistically. 
Nicola {\em et al.} \cite{nicola2019total} proposed 
regression-based approximations for the TSP, the CVRP with time windows, the multi-region multi-depot pickup and delivery problem accounting for the time windows, distances, customer demands and capacities of the vehicles.

Using similar assumptions to \cite{daganzo1987modeling1, daganzo1987modeling2}, Carlson and Behzoodi \cite{carlsson2017worst} studied the worst-case time window distribution in terms of routing costs, and found that it corresponds to a concentrated demand on a single time period when the number of customers is low, or to a uniform distribution over the time for a large number of customers.
Although VRP asymptotic approximations are intuitive and simple to use, they are mainly grounded on empirical evidence as opposed to the analytical derivation of the BHH theorem.

There has been several recent applications, whether for districting \cite{lei2016solving}, location problems \cite{wang2017continuum}, fleet sizing \cite{franceschetti2017strategic} or accounting for pickups and deliveries \cite{bergmann2020integrating}.

Our model is based for the time windows considerations on similar assumptions to \cite{daganzo1987modeling1, daganzo1987modeling2, carlsson2017worst} in the sense of taking non-overlapping intervals as time windows. However, we differ from the previous accounts by pursuing a theoretical derivation of the asymptotic approximations from the BHH formula.   

\section{Preliminaries}
In this section, we present the formulation of the routing problem, the assumptions to generate random instances, and some previously obtained asymptotic approximations, starting from the famous BHH formula.

\subsection{The TSP with Time Windows (TSP-TW)}
The set of points to visit is denoted ${\cal P}$, and is a finite set $|{\cal P}|<\infty$. 
The depot is denoted by the point $0$, and ${\cal P}_{all}$ denotes the set of all points, {\em i.e.} ${\cal P}_{all} = {\cal P} \cup \{0\}$.
For each point $i\in {\cal P}_{all}$, we denote $b_i$, $f_i$, and $s_i$ the earliest starting time, the latest finishing time associated with $i$, and the service time respectively. 
For each couple of points $i, j\in {\cal P}_{all}$, $d_{ij}$ and $c_{ij}$ denote respectively the travel duration between $i$ and $j$, and the cost of traversing the arc $(i,j)$.
The specifications of the costs $c_{ij}$, and the times $d_{ij}$ are discussed in the assumption section.
The goal is to find an order of visit that minimizes the total tour duration of the vehicle starting from the depot.
For a couple of points $i, j \in {\cal P}_{all}$, let $x_{ij}$ be a binary variable equal to one if and only if $j$ is visited after $i$.
For all $i \in {\cal P}_{all}$, let $t_i$ be the start service time of $i$. Note that the vehicle traveling from $i$ to $j$ can wait in case of early arrival at $j$, \textit{i.e.}, $t_i+s_i<b_j$ is allowed. The parameter 
$t_0$ represents the start service time from the depot at the beginning of the tour.    
A formulation of the problem is as follows:

\begin{eqnarray}
\min \quad \sum_{i, j\in {\cal P}_{all}} c_{ij} x_{ij} &&\\
\sum_{j\in {\cal P}_{all}\setminus \{i\} } x_{ij} =1 && \forall i\in {\cal P}_{all}\\
\sum_{j\in {\cal P}_{all}\setminus \{i\} } x_{ji} =1 && \forall i\in {\cal P}_{all}\\
t_j \geq  t_i + s_i + d_{ij} + M (x_{ij}-1) && \forall i\in {\cal P}_{all}\;\; \forall j\in {\cal P}\\
f_0 \geq  t_i + s_i + d_{i0} && \forall i\in {\cal P}\\
b_i \leq t_i \leq f_i - s_i&& \forall i\in {\cal P}_{all} \\
t_{i}\geq 0 && \forall i \in {\cal P}_{all}\\
x_{ij}\in \{0,1\} && \forall i,j \in {\cal P}_{all}
\end{eqnarray}

Constraints $(1)$ and $(2)$ are the flow conservation constraints, ensuring that each point is visited exactly once.
Constraints $(3)$ and $(4)$ track the arrival times, with $M$ is a very large number that could take the value of $f_0$.
Constraint $(5)$ ensures that the arrival times satisfy the time window constraints.
Constraints $(6)$, and $(7)$ represent the binary and the bounding restrictions of the decision variables.

\subsection{The TSP with Time Slots (TSP-TS)}
The TSP with Time Slots is a special case of the TSP-TW. 
Time slots are defined as a partition of the time horizon $[0,h]$ of the problem.
Introducing a horizon $h$ in the formulation $(P_1)$ comes down to setting $f_0=h$.
Time slots represent non-overlapping time windows, with the characteristic that several points could be assigned to a same time slot.
Thus, new parameters are needed for the description: the number of time slots $m$, and the time slot lengths $(l_k)_{1\leq k \leq m}$.
We use the abbreviation TSP-$m$TS to refer to this problem.

The TSP with Identical Time Slots (TSP-ITS) is a variant of the TSP-TS, where time slots have the same length, {\em i.e.} $l_k=h/m$. 
We use the abbreviation TSP-$m$ITS for this special case.

TSP-TS provides a relevant model in several applications; delivery at home or technicians visits, which are offered in slots morning, afternoon or evening visits. 
TSP-TS definition is motivated in this paper by its asymptotic properties and its closeness to TSP-TW.

\subsection{Assumptions}
In this subsection, we discuss the assumptions on the characteristics of the input data of the addressed problem. 
 
\paragraph{Assumptions for TSP-TW.}
The problem input parameters that are used to randomly generate instances are: the size $n$ of ${\cal P}_{all}$, 
the time horizon $h$, 
and the side $a$ of the area where the points ${\cal P}_{all}$ are located. 
Given these parameters, the assumptions to generate a TSP-TW instance are:
\begin{enumerate}[(i)]
\item The points ${\cal P}_{all}$ are uniformly distributed on the {\em Euclidean} plane $\mathbb{R}^2$ within a square area $\mathcal{R}$, {\em i.e.}, $\mathcal{R} = \{(x,y) \in \mathbb{R}^2: 0\leq x\leq a, \;\;\text{and}\;\; 0\leq y\leq a \}$. 
The random variables associated with these $n$ points, $X_0, \ldots, X_{n-1}$, are supposed to be independent and identically distributed (i.i.d).
$X_0$ is the corresponding random variable of the depot.
\item The cost $c_{ij}$ is taken to be equal to the travel duration $d_{ij}$, and the stop durations are taken to be null $s_i=0\;\;(\forall i \in {\cal P}_{all})$. 
\item The duration $d_{ij}$ is the  {\em Euclidean} distance between $i$ and $j$ ({\em i.e.}, we assume that the vehicles have a constant speed of one unit of space per unit of time)\footnote{The given approximations are valid for any cost function at the condition to be proportional to the  {\em Euclidean} distance metric.}.
\item For optimal TSP tours in the asymptotic domain, we consider the return arc to the depot to be no different than any other arc of the tour in terms of travel duration.
\end{enumerate}

The last assumption is useful for constructing the asymptotic approximation using the BHH formula.
No specific assumption is associated with the time windows $[b_i, f_i]$, except for the depot: $b_0=0$ and $f_0=h$.

\paragraph{Assumptions for TSP-TS.}
The assumptions of the TSP-$m$TS are same as those of the TSP-TW, in addition to the following assumptions: 
\begin{enumerate}[(i)]
\setcounter{enumi}{4}
\item The time slots $A_k=[B_k,F_k]$, $k \in \{1, \ldots m\}$ are a partition of the time horizon $[0,h]$, with $B_1=0$, $F_m=h$ and $F_k=B_{k+1}\: \forall 1<k<m$. For the ease of exposition of the results we limit our study to the lengths $l_k = F_k - B_k$ which are integer dividers of $n$. Later, this choice allows to consider integer number of nodes to visit during each time slot. 
\item The points ${\cal P}$ are assigned to time slots $(A_k)$ at random in a proportional and an i.i.d. fashion. That the probability to assign a point to the time slot $A_k$ is $p_k = |A_k|/m = l_k/m$. This assumption is equivalent to draw uniformly at random a point in the time horizon $[0,h]$ and to assign this point to the corresponding time slot. 
\end{enumerate}
The assumptions of the TSP-$m$ITS are same as those of the TSP-$m$TS, in addition to the following:
\begin{enumerate}[(i)]
\setcounter{enumi}{6}
\item Each time slot $k \in \{1, \ldots m\}$ begins at the time $B_k=(k-1) \times h/m$ and ends at the time $F_k=k \times h/m$.
\end{enumerate}
The uniform distribution of the hypotheses (i), and (vi) are relaxed in Section 5, by considering realistic worst-case distributions.

\subsection{Approximations in the literature}
The main results obtained for the TSP, the VRP and the TSP-$m$ITS problems are discussed below. 

\paragraph{TSP asymptotic approximation} 
The Beardwood-Halton-Hammersley (BHH) theorem gives an approximation of the optimal tour length when the number of points to visit goes to infinity on a compact area \cite{beardwood1959shortest}. 
This length approaches a constant value. 
The theorem below is stated for a planar region given any probability spatial distribution of the points ${\cal P}_{all}$. 

\begin{thm}[\textbf{BHH}]
\label{thm1}
For a set of $n$ random variables $\{X_0, ..., X_{n-1}\}$ $(0<n<\infty)$ independently and identically distributed on a compact support $\mathcal{R} \subset \mathbb{R}^2$, the length $L_{n}^{tsp}$ 
of a shortest Hamiltonian path linking $X_i$ satisfies
$$\frac{L_{n}^{tsp}}{\sqrt{n}}  \xrightarrow[n\to \infty]{}  \beta^{tsp} \int_{\mathbb{R}^2}\sqrt{\overline{f}(x)}\;dx,$$
with $\overline{f}(.)$ is the absolutely continuous part of the probability density function $f(.)$ of the variables $X_i$, and $\beta^{tsp}$ is a constant. 
\end{thm}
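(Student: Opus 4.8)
The plan is to prove the result first for the uniform distribution on the unit square and then bootstrap to an arbitrary density $f$ by localization. I would work with the Euclidean TSP functional $L(\{x_1,\dots,x_n\})$, using its \emph{homogeneity} (rescaling a configuration by $\alpha$ scales its tour length by $\alpha$) and its near-\emph{subadditivity} under partitioning; the gap between a shortest Hamiltonian path and a shortest closed tour is one edge of length $O(1)$, hence negligible against $\sqrt n$, so I pass freely between them. For the upper bound in the uniform case, partition $[0,1]^2$ into $m^2$ subsquares of side $1/m$, tour each separately, and splice the subtours with $O(m^2)$ connecting edges of length $O(1/m)$, costing $O(m)$. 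Writing $\psi(n)=\mathbb{E}[L_n]$ for $n$ i.i.d.\ uniform points and using homogeneity, the $n/m^2$ expected points per cell yield, after controlling the multinomial cell-count fluctuations, a self-similar recursion
$$\psi(n) \le m\,\psi\!\left(\tfrac{n}{m^2}\right) + C m,$$
which forces $\psi(n)/\sqrt n$ to converge along $n=m^{2k}$.

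To turn this into a genuine limit and isolate a single constant $\beta^{tsp}$, I would introduce the \emph{boundary} TSP functional $L_B$, in which travel along cell boundaries is free. This functional is \emph{superadditive} under the same partition, so $\mathbb{E}[L_B]/\sqrt n$ converges from below, while $0\le L-L_B$ is of lower order. Matching the subadditive upper bound against the superadditive lower bound pins down $\lim_n \psi(n)/\sqrt n=\beta^{tsp}$.

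Next I would upgrade convergence of the mean to almost-sure convergence. A variance bound comes from the Efron--Stein inequality: resampling one point perturbs $L_n$ by at most a constant times that point's nearest-neighbour distance, whose square has expectation $O(1/n)$, so summing over the $n$ points gives $\mathrm{Var}(L_n)=O(1)$. Chebyshev's inequality along the subsequence $n_k=k^2$ then produces summable tails, and Borel--Cantelli yields $L_{n_k}/\sqrt{n_k}\to\beta^{tsp}$ almost surely; since $L_n$ is monotone under adding points (delete a vertex and shortcut by the triangle inequality) and $n_{k+1}/n_k\to1$, a squeezing argument interpolates to the full sequence.

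Finally I would remove uniformity. Cover the support by a grid of cells $Q_j$ of side $\varepsilon$ on which $\overline f$ is nearly constant; by the law of large numbers cell $Q_j$ holds about $n\,\overline f(x_j)\varepsilon^2$ asymptotically uniform points, so its within-cell length is $\approx\beta^{tsp}\varepsilon\sqrt{n\,\overline f(x_j)\varepsilon^2}=\beta^{tsp}\sqrt n\,\sqrt{\overline f(x_j)}\,\varepsilon^2$. Summing over cells gives a Riemann sum, whence
$$\frac{L_n}{\sqrt n}\longrightarrow \beta^{tsp}\int_{\mathbb{R}^2}\sqrt{\overline f(x)}\,dx,$$
the inter-cell edges and the singular part of $f$ contributing only lower-order terms. \textbf{The main obstacle} is the lower bound: the bare subadditive inequality gives only boundedness and a $\limsup$, so the existence of a single universal $\beta^{tsp}$ genuinely requires the superadditive boundary functional together with the estimate $L-L_B=o(\sqrt n)$; controlling the error terms \emph{uniformly} as cell counts fluctuate --- in both the uniform recursion and the general-density Riemann-sum passage --- is where the real work lies, whereas the concentration step is mechanical once the variance bound is in hand.
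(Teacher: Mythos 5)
This statement is not proved in the paper at all: it is the classical Beardwood--Halton--Hammersley theorem, quoted verbatim with a citation to the 1959 original, so there is no in-paper argument to compare against. Your sketch is, however, a faithful outline of the standard modern proof from the literature (the subadditive Euclidean functional framework of Steele and Redmond--Yukich): homogeneity plus geometric subadditivity for the upper recursion, the superadditive boundary functional with $L-L_B=o(\sqrt n)$ to pin down a single constant $\beta^{tsp}$, Efron--Stein giving $\mathrm{Var}(L_n)=O(1)$ via nearest-neighbour perturbations, Borel--Cantelli along $n_k=k^2$ followed by monotone interpolation, and finally localization to a general density. Two spots are under-specified relative to what a complete proof needs. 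First, your Riemann-sum passage assumes $\overline f$ is nearly constant on small cells; a general absolutely continuous part need not be continuous, so one must approximate $\overline f$ from above and below by step densities and use monotonicity and continuity of the limit functional in the density, rather than pointwise evaluation at $x_j$. Second, the claim that the singular part of $f$ contributes only lower-order terms is itself one of the delicate points of the original theorem: it requires covering the (Lebesgue-null) support of the singular part by squares of arbitrarily small total area $\delta$ and showing the roughly $n\cdot(\text{singular mass})$ points landing there contribute at most $O(\sqrt{n\,\delta})$, which vanishes relative to $\sqrt n$ only after letting $\delta\to 0$. Neither issue invalidates your plan --- both are handled exactly along these lines in the literature --- but they are genuine pieces of work, not bookkeeping.
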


Under the uniform probability distribution of $\{X_0, ..., X_{n}\}$, the BHH formula becomes,
$$\frac{L_{n}^{tsp}}{\sqrt{n}}  \xrightarrow[n\to \infty]{}  \beta^{tsp} \sqrt{|\mathcal{R}|}.$$
Thus, $L_{n}^{tsp} = \beta^{tsp} \sqrt{n\;|\mathcal{R}|}+\smallO(\sqrt{n}),$
where $|\mathcal{R}|$ is the surface of the planar area $\mathcal{R}$ where the points are distributed. 

\paragraph{Computation of $\beta^{tsp}$. }
The constant $\beta^{tsp}$ does not depend on the number of points $n$ in the infinite domain. 
However, for smaller values of $n$, several estimates are given for $\beta^{tsp}$ in the literature. 
According to Arlotto and Steele \cite{arlotto2016beardwood}, $\beta^{tsp}$ varies in the interval: $0.62499 \leq \beta^{tsp} \leq 0.91996$. Stein \cite{stein1978asymptotic} uses the estimate $\beta^{tsp} = 0.765$, while Applegate {\em et al.} \cite{applegate2006traveling} and Lei {\em et al.} \cite{lei2016solving} give practical estimates, which depend on the number $n$, as shown in Table~\ref{tab:beta}.

\begin{table}[h!]
    \centering
    \begin{tabular}{|c|c|}
        \hline
         $n$ & $\beta^{tsp}$ \\ \hline \hline
         20 & 0.8584265 \\ \hline
         30 & 0.8269698\\ \hline
         40 & 0.8129900\\ \hline
         50 & 0.7994125\\ \hline
         60 & 0.7908632\\ \hline
         70 & 0.7817751\\ \hline
         80 & 0.7775367\\ \hline
         90 & 0.7773827\\ \hline
    \end{tabular}
    $\;\;\;\;\;$
        \begin{tabular}{|c|c|}
        \hline
         $n$ & $\beta^{tsp}$ \\ \hline \hline
         100 & 0.7764689 \\ \hline
         200 & 0.7563542\\ \hline
         300 & 0.7477629\\ \hline
         400 & 0.7428444\\ \hline
         500 & 0.7394544\\ \hline
         600 & 0.7369409\\ \hline
         700 & 0.7349902\\ \hline
         800 & 0.7335751\\ \hline
    \end{tabular}
    $\;\;\;\;\;$
    \begin{tabular}{|c|c|}
        \hline
         $n$ & $\beta^{tsp}$ \\ \hline \hline
         900 & 0.7321114\\ \hline
         1000 & 0.7312235\\ \hline
         2000 & 0.7256264\\ \hline
    \end{tabular}
    \caption{Empirical estimates of $\beta^{tsp}$ given by  \cite{lei2016solving} for $n\leq 90$, and by \cite{applegate2006traveling} otherwise.}
    \label{tab:beta}
\end{table}

It is worth noting that the overall proposed $\beta^{tsp}$ values in the literature have been obtained by resolutions on squares surfaces, like those of Table~\ref{tab:beta}.
In theory $\beta^{tsp}$ does not depend on the shape of the support  surface or area. 
But, the impact of the shape can be of high significance.
As noted by \cite{daganzo1984length, chien1992operational, kwon1995estimating}, the BHH formula does not behave well when the base surface becomes elongated.
We show this fact in the following two figures by giving some computational results on rectangles of varying aspect ratios $\alpha$, {\em e.g.} the ratio of the width to the height of the rectangle.
The values of $\beta^{tsp}$ used for the TSP approximations in the figures are those of  Table~\ref{tab:beta} depending on the number $n$ of points.
Figure~\ref{fig:bp_vrp1} shows the gaps of the TSP approximation to the optimal tour length for several $n=1,000$ points instances uniformly distributed on rectangles with various $\alpha$ and the same surface. 
The tours are computed with the Concorde solver \cite{applegate1998solution}.
The TSP approximation becomes less accurate for higher values of the aspect ratio $\alpha$ on the same surface. 
Additionally, when the number of points to visit $n$ is small, the impact of $\alpha$ on the BHH formula is even greater.
Figure~\ref{fig:bp_vrp2} shows the quality gaps of the TSP approximation for several values of $n$.
For $n=10$ and $20$, the average of the absolute gaps is equal to resp. $8.6\%$ and $6\%$ on a square surface, and to $46.81\%$ and $38.12\%$ on a rectangle surface with $\alpha = 10$.
The BHH approximation hugely underestimates the optimal tour length for cases of large $\alpha$ and small $n$. When applying a TSP asymptotic approximation on an elongated surface, it could be useful to first derive a set of $\beta^{tsp}$ values, which better account for the shape of the working surface.

\begin{figure}[htb!]
    \centering
    \includegraphics[width=0.45\textwidth]{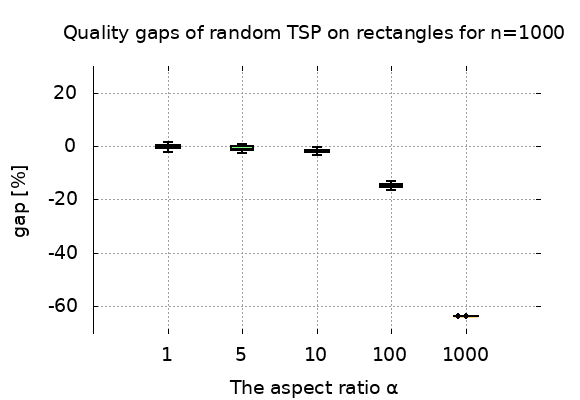}
    \caption{The distributions of the gaps of $L^{tsp}_{n}$ to the optimal tour lengths ($=(L^{tsp}_{n}/cost_n-1) \times 100$ where $cost_n$ is the optimal tour length) of instances of $n=1000$ points  uniformly distributed on rectangles with several aspect ratio $\alpha$. All the rectangles have the same surface area $10^6$. For each $\alpha\in \{1, 5, 10, 100, 1000\}$, 50 TSP instances are solved.}
    \label{fig:bp_vrp1}
\end{figure}

\begin{figure}[htb!]
    \centering
    \includegraphics[width=0.45\textwidth]{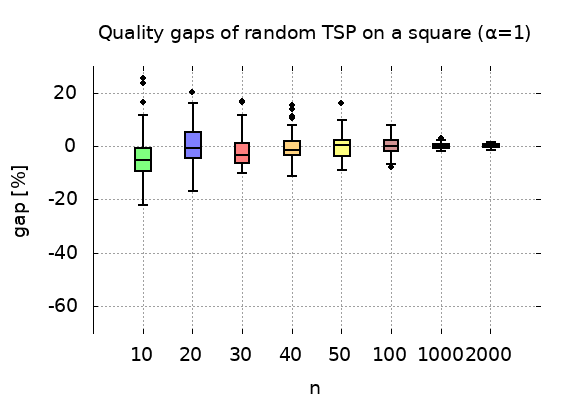}
    \includegraphics[width=0.45\textwidth]{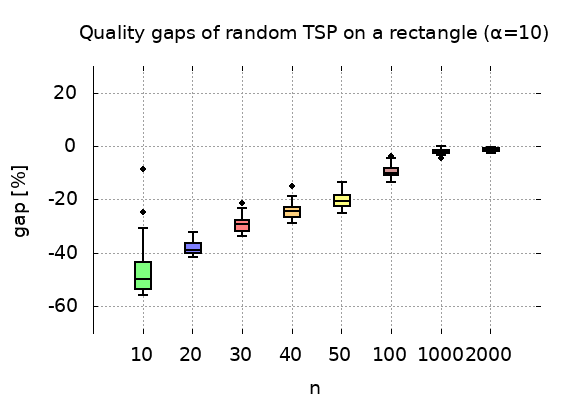}
    \caption{The distributions of the gaps of $L^{tsp}_{n}$  to the optimal tour lengths of instances of $n$ points uniformly distributed on rectangles with  $\alpha= 1$ (a square) and $\alpha = 10$. All the rectangles have the same surface area of $10^6$. For each $n\in \{10, 20, 30, 40, 50, 100, 1000, 2000\}$, 50 TSP instances are solved.}
    \label{fig:bp_vrp2}
\end{figure}






\paragraph{The TSP-$m$ITS.} 
Below a satisfiability condition and an asymptotic approximation for the TSP-$m$ITS case, previously presented in \cite{rifki2021asymptotic}.
The total number of points with the depot is equal to $n$.
We call an TSP-$m$ITS random generation model, a mechanism generating instances of the problem TSP-$m$ITS satisfying the related assumptions (see Section 3.1.2):\\

\noindent \textbf{Satisfiability condition of TSP-$m$ITS:}
For the TSP-$m$ITS random generation model, 
the satisfiability condition of feasible tours linking the realisations of $\{X_0, ..., X_{n-1}\}$ under the  {\em Euclidean} metric satisfies on average 
\begin{equation} \label{eq:pareto}
    n \times m=\frac{h^2}{|\mathcal{R}|\;(\beta^{tsp})^2}.
\end{equation}

\begin{prop}[\textbf{Approximation TSP-$m$ITS}]
\label{prop2}
For the TSP-$m$ITS random generation model,
the asymptotic length $L_{n}^{tsp-mits}$ of the shortest tour linking $\{X_0, ..., X_{n-1}\}$ under the  {\em Euclidean} metric if feasible is equal to 
\begin{align}
L_{n}^{tsp-mits} &= \beta^{tsp} \; \sqrt{n\;m\;|\mathcal{R}|}  + \smallO\Big(\sqrt{n\;m}\Big). \label{eq:tsp-mits}
\end{align}
\end{prop}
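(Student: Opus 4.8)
The plan is to exploit the fact that, because the $m$ time slots are consecutive, non-overlapping intervals partitioning $[0,h]$, any time-feasible tour must serve all points of slot $A_1$, then all points of $A_2$, and so on up to $A_m$: a point assigned to $A_{k+1}$ cannot be served before a point of $A_k$ without violating its window. Hence the optimal TSP-$m$ITS tour decomposes into $m$ consecutive Hamiltonian sub-paths, the $k$-th spanning exactly the points assigned to $A_k$, joined by $m+1$ ``linking'' arcs (depot to first cluster, between consecutive clusters, and last cluster back to the depot). First I would make this temporal-ordering argument precise and thereby reduce $L_n^{tsp-mits}$ to the sum of the $m$ sub-path lengths plus the linking arcs.

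The second step observes that by assumption (vi) the spatial coordinates are independent of the slot assignment, so, conditioned on the assignment, the $N_k$ points of $A_k$ are i.i.d.\ uniform on $\mathcal{R}$. Each point lands in $A_k$ with probability $1/m$, so $N_k$ is $\mathrm{Binomial}(n,1/m)$ with mean $n/m$ (an integer under the divisor assumption on the $l_k$), and for $m=\smallO(n)$ it concentrates as $N_k = n/m + O_p(\sqrt{n/m})$. Since Theorem~\ref{thm1} is stated precisely for shortest \emph{Hamiltonian paths}, it applies directly to each cluster and gives a sub-path length $\beta^{tsp}\sqrt{N_k\,|\mathcal{R}|}+\smallO(\sqrt{N_k})$. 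Substituting the concentration estimate makes each sub-path $\beta^{tsp}\sqrt{(n/m)\,|\mathcal{R}|}+O_p(\sqrt{|\mathcal{R}|})+\smallO(\sqrt{n/m})$, and summing the leading terms over the $m$ clusters yields
\[
\sum_{k=1}^m \beta^{tsp}\sqrt{(n/m)\,|\mathcal{R}|} = \beta^{tsp}\sqrt{m^2\cdot (n/m)\cdot|\mathcal{R}|} = \beta^{tsp}\sqrt{n\,m\,|\mathcal{R}|},
\]
which is exactly the leading term of \eqref{eq:tsp-mits}.

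It then remains to show that every discarded contribution is $\smallO(\sqrt{nm})$. The $m+1$ linking arcs are each bounded by the diameter $\sqrt{2\,|\mathcal{R}|}$ of $\mathcal{R}$, so they contribute $O(m\sqrt{|\mathcal{R}|})=\smallO(\sqrt{nm})$ exactly in the regime $m=\smallO(n)$, which is the same regime in which $n/m\to\infty$ makes the per-cluster limit meaningful; the fluctuation terms $O_p(\sqrt{|\mathcal{R}|})$ from the randomness of $N_k$ likewise sum to $O_p(m)=\smallO(\sqrt{nm})$. For the lower bound I would note that restricting the optimal tour to $A_k$ gives a Hamiltonian path on those points whose length dominates the optimal free path; for the upper bound I would concatenate the $m$ optimal cluster paths with the linking arcs, whose time-feasibility is precisely what the satisfiability condition \eqref{eq:pareto} guarantees on average, as that condition is equivalent to $h/m=\beta^{tsp}\sqrt{(n/m)\,|\mathcal{R}|}$, i.e.\ the slot duration matches the per-cluster path length.

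The main obstacle I anticipate is controlling the error terms uniformly: Theorem~\ref{thm1} is an asymptotic statement for a single sequence with $n\to\infty$, whereas here the clusters form a triangular array whose common size $n/m$ grows while their number $m$ also grows. Summing $m$ individual $\smallO(\sqrt{n/m})$ errors does not by itself produce $\smallO(\sqrt{nm})$ without a uniform rate. I would close this gap by invoking a concentration/complete-convergence strengthening of BHH (of Rhee--Talagrand or Steele type), ensuring that the normalized path length of $N$ uniform points stays within an $o(1)$ band of $\beta^{tsp}\sqrt{|\mathcal{R}|}$ uniformly for $N\ge n/m\to\infty$, so that the accumulated error over the $m$ clusters is genuinely $\smallO(\sqrt{nm})$.
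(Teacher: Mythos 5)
Your proposal follows essentially the same route as the paper: decompose the time-feasible tour into one sub-path per slot, apply the BHH theorem to each cluster of expected size $n/m$, absorb the linking arcs, and sum to get $\beta^{tsp}\sqrt{n\,m\,|\mathcal{R}|}$. The paper's own argument (given for the more general Proposition~\ref{prop4} and invoked here via $l_k=h/m$) is considerably less careful than yours --- it works directly with the expectations $E(n_k)$ and handles the bridging arcs via assumption~(iv) rather than a diameter bound, and it does not address the uniformity of the BHH error over the $m$ growing clusters, a genuine issue you correctly identify and propose to close with a Rhee--Talagrand type concentration bound.
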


The idea behind equation (\ref{eq:pareto}) is to impose the condition that each time slot length is enough for visiting the points affected to this time slot. 
The approximation (\ref{eq:tsp-mits}) is obtained by considering that each time slot infers a TSP tour, such that the BHH theorem could be used. 
Both arguments will be revisited in Section 4.1 for the general case of the TSP-$m$TS.

\section{Asymptotic approximations and bounds}\label{sec:appr}
In this section, we provide some asymptotic approximations and bounds for the optimal tours of the TSP-$m$TS, and the TSP-TW.
Table~\ref{tab1} summarizes the obtained results.

\begin{table}[htb]
    \centering
    \begin{tabular}{|c||c|c|c|}
        \hline
         TW & satisf. & approx.& bound\\  
          constraints & & &\\ \hline
         $\emptyset$ & - & Theorem~\ref{thm1}&  \\\hline
         ITS & Eq(\ref{eq:pareto}) & Prop.~\ref{prop2} &\\ \hline
         TS & Eq(\ref{eq:pareto2}) & Prop.~\ref{prop4} & Prop.~\ref{prop5} \\ \hline
         TW &  & Prop.~\ref{prop6} & \\ \hline
         \end{tabular}
    \caption{Results obtained for the satisfiability condition (satisf.), the asymptotic approximation (approx.), and bounds (bound), for the TSP with different time windows constraints. 
    Equation Eq(\ref{eq:pareto}), Prop. 1, and Theorem 1 are stated in the preliminaries. 
    }
    \label{tab1}
\end{table}

\subsection{Asymptotic approximation of the TSP-$m$TS}
In the following a satisfiability condition and an asymptotic approximation for the TSP-$m$TS case.
We provide also a bound linking the approximation of TSP-$m$TS and that of TSP-$m$ITS.
The time slots $(A_k)_{1 \leq k\leq m}$ are considered inputs of the problem.\newline

\noindent \textbf{Satisfiability condition of TSP-$m$TS}:
For the TSP-$m$TS random generation model with the time slots $(A_k)_{1 \leq k\leq m}$, 
the satisfiability condition of feasible tours linking the realisations of $X_0, ...X_{n-1}$ under the  {\em Euclidean} metric satisfies on average
\begin{equation} \label{eq:pareto2}
    l_{min} =\frac{(\beta^{tsp})^2\;|\mathcal{R}|}{h}\times n,
\end{equation}
where $l_{min} = \min_{1\leq k \leq m} |A_k|$.\newline

In order to derive the condition (\ref{eq:pareto2}), we adopt the following approach.
As the distribution of points across time slots is proportional to their length, the expected number $E(n_k)$ of points visited in each time slot $A_k$ is equal to
$$ E(n_k) = \frac{n\times l_k}{h}.$$

To treat all time slots equally, we have considered $n$ points (instead of $n-1$) in this distribution of points. 
In the TSP and the TSP-TS context, the depot does not carry a special significance, except that it is the first point where the tour starts.
In terms of space, the depot is treated equally to the other points (assumption (i)).
In the asymptotic domain, one point difference in the set of points to visit has a negligible impact.

In order to have at least one feasible tour of the realizations of $X_i$, the duration of the shortest (Hamiltonian) path linking the points of each time slot and bridging to the surrounding slots must be at most equal to the size of the time slot.
Figure~\ref{fig0-satis} shows a representation of crossing the points of a time slot $A_i$.
The feasibility condition is given by $b_i+c_i+e_i \leq l_i$ for all $A_i$.
We approximate the travel time $b_i+c_i+e_i$ in the asymptotic domain by a BHH formula.
This formula allows us to approximate any path of $E(n_k)$ arcs  by a tour of $E(n_k)$ points since its formulation does not require knowing the specific positions of the points to visit.
For more precaution, we use assumption (iv), which allows us to treat the return arc of an optimal tour in the asymptotic domain like any other arc of the tour.
In this configuration, the proportion of $B_i$ and $C_i$ arcs in the time slot $A_i$ (see Figure~\ref{fig0-satis}) is treated as one arc.
Then, we suppose asymptotically that 
\begin{equation}\label{eq:pareto2-1}
L_{E(n_1)}^{tsp} \leq l_1,\;
L_{E(n_2)}^{tsp} \leq l_2,
...,\;
L_{E(n_m)}^{tsp} \leq l_m.
\end{equation}

\begin{figure}[ht]
\centering
\includegraphics[width=0.5\textwidth]{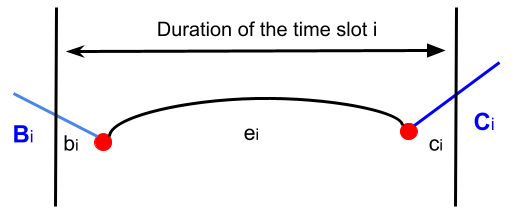}
\caption{A representation of crossing the points of a given time slot $A_i$. $B_i$ and $C_i$ represent the bridging arcs of $A_i$, while $b_i$ (resp. $c_i$) is the duration of crossing $B_i$ (resp. $C_i$) within the interval of the time slot. $e_i$ is the duration of the shortest path linking the points affected to the time slot. 
For the first time slot, $B_i$ does not exist and $b_i=0$.
For the last time slot, $C_i$ is the returning arc to the depot and $c_i=C_i$.}
\label{fig0-satis}
\end{figure}

\noindent Let us denote by $C= \beta^{tsp}\sqrt{|\mathcal{R}|/h}$, we have 
\begin{align*}
(\ref{eq:pareto2-1}) &\iff
C\times \sqrt{n} \leq \sqrt{l_k}\;\;\;\;\;\;\forall k \in \{1,2,...m\}\\
& \implies C\times \sqrt{n} \leq \sqrt{l_{min}}
\end{align*}

According to the BHH formula, the length of the tours of the time slots evolves as $\sqrt{n}$. 
The satisfiability of a TSP-TS tour of $n$ points is determined by the intersection between the function $C\times \sqrt{n}$, calibrated for the instance parameters ($|\mathcal{R}|$, $h$), and the value $\sqrt{l_{min}}$ of the smallest time slot length. 
Figure~\ref{fig1-satis} shows that for several $\sqrt{l_{min}}$ values the satisfiable number of customers varies: $n_1$ for $l_{min}=l_1$, and $n_2$ for $l_{min}=l_2$.
Thus, the upper bound of the number of customers $n$ that can be served asymptotically is limited by $l_{min}$, the horizon $h$, and the area $|\mathcal{R}|$, {\em i.e.}, 
$$n\leq n_{max}=\frac{l_{min}}{C^2} = \frac{l_{min} \times h}{(\beta^{tsp})^2\;|\mathcal{R}|}.$$

\begin{figure}[ht]
\centering
\begin{tikzpicture}
  \draw[->] (-0.5, 0) -- (5, 0) node[right] {$n$};
  \draw[->] (0, -0.5) -- (0, 2) node[above] {$y$};
  \draw[scale=0.1, domain=0:50, smooth, variable=\x, thick] 
  plot (\x, {1.5*sqrt(\x)}) node[above] {$C\times \sqrt{n}$};
  \draw[scale=0.1, domain=0:50, smooth, variable=\x, dashdotdotted]  
  plot (\x, 9) node [right] {$\sqrt{l_1}$};
  \draw[scale=0.1, domain=0:50, smooth, variable=\x, dashdotdotted]  
  plot (\x, 4.5) node[right] {$\sqrt{l_2}$};
  \draw[scale=1, domain=0:2, dashed, variable=\y, black]  
  plot (1, \y) node[above right] {$n_2$} ;
  \draw[scale=1, domain=0:2, dashed, variable=\y, black]  
  plot (4, \y) node[above right] {$n_1$} ;
\end{tikzpicture}
\caption{Plot of the function $y=C\sqrt{n}$, $y=\sqrt{l_1}$, and $y=\sqrt{l_2}$, with $l_1>l_2>0$.}
\label{fig1-satis}
\end{figure}
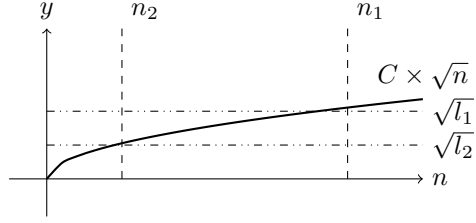

\noindent Similarly, if $n$ is fixed, the lower bound of the minimum length of the time slots  is given asymptotically by 
$$\frac{n\times(\beta^{tsp})^2\times|\mathcal{R}|}{h} \leq l_{min}.$$
The equality case, that is the equation (\ref{eq:pareto2}) defines a limiting curve of the satisfiable region of tours linking the realizations of $X_i$.

Since the distribution of points to visit on time windows follows an i.i.d uniform distribution, then for a given time slot $A_k$, the assignment of a point $i$ can be seen as a Bernoulli trial with a probability $p_k=l_k/h$. 
The random variable $n_k$ of the number of points of a time slot follows the binomial distribution $\mathcal{B}(n, p_k)$. Under the central limit theorem, $n_k$ converges asymptotically to a normal distribution.
Being a symmetric distribution, half of the realizations of $n_k$ are over $E(n_k)$, and the other half is below for each time slot $k$.
The equation (\ref{eq:pareto2}) is thus given on average following this argument.\\

\begin{prop}[\textbf{Approximation of TSP-$m$TS}]
\label{prop4}
For the TSP-$m$TS random generation model with the time slots $(A_k)_{1 \leq k\leq m}$,
the asymptotic length $L_{n}^{tsp-mts}$ of the shortest tour linking $X_0, ...X_{n-1}$ under the  {\em Euclidean} metric if feasible is equal to
\begin{equation}\label{eq:tsp-mts}
L_{n}^{tsp-mts}  = L_{n}^{tsp} \sum_{k=1}^{m} \sqrt{\frac{l_k}{h}} = \beta^{tsp} \; \sqrt{\frac{n\;|\mathcal{R}|}{h}} \sum_{k=1}^{m} \sqrt{l_k},
\end{equation}
where $l_k = |A_k|$, $\forall k \in \{1, \ldots m\}$.
\end{prop}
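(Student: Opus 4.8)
The plan is to exploit the fact that the time slots partition the horizon into consecutive, non-overlapping intervals, so that any feasible tour must visit the points slot by slot in chronological order. First I would observe that along any feasible tour the service times $t_i$ are non-decreasing (travel times are non-negative and waiting only increases $t$), and that a point assigned to $A_k$ satisfies $B_k \le t_i \le F_k$ while a point assigned to $A_{k'}$ with $k' > k$ satisfies $t_j \ge B_{k'} = F_{k'-1} \ge F_k \ge t_i$. Hence the visiting order groups the points by slot: the route first traverses the $n_1$ points of $A_1$, then the $n_2$ points of $A_2$, and so on. Consequently the optimal tour decomposes into $m$ consecutive sub-paths, where the portion restricted to $A_k$ is a shortest Hamiltonian path on the $n_k$ points of that slot, the connecting arc from $A_{k-1}$ being treated as an ordinary arc through assumption (iv). Because the sub-paths are coupled only through these bridging arcs, optimising the tour is asymptotically equivalent to optimising each slot independently.

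Next I would apply the BHH theorem (Theorem~\ref{thm1}) within each slot. Since the spatial positions and the time-slot assignments are drawn independently, conditionally on $n_k$ the points of $A_k$ remain i.i.d.\ uniform on $\mathcal{R}$, so the $k$-th sub-path has length $L_{n_k}^{tsp} = \beta^{tsp}\sqrt{n_k\,|\mathcal{R}|} + \smallO(\sqrt{n_k})$. Summing over slots gives
\begin{equation*}
L_n^{tsp-mts} = \sum_{k=1}^m L_{n_k}^{tsp} = \beta^{tsp}\sqrt{|\mathcal{R}|}\sum_{k=1}^m \sqrt{n_k} + \smallO\Big(\textstyle\sum_{k=1}^m \sqrt{n_k}\Big).
\end{equation*}
It then remains to replace the random counts $n_k$ by their expectations. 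Since $n_k \sim \mathcal{B}(n,p_k)$ with $p_k = l_k/h$, a concentration argument (the binomial has standard deviation $O(\sqrt{n})$, whence $\sqrt{n_k} = \sqrt{E(n_k)} + O_P(1)$) lets me substitute $E(n_k) = n\,l_k/h$ with an error that is lower order for fixed $m$. This yields $\beta^{tsp}\sqrt{|\mathcal{R}|}\sum_k \sqrt{n l_k/h} = \beta^{tsp}\sqrt{n|\mathcal{R}|/h}\sum_k \sqrt{l_k}$, which is the right-hand side of (\ref{eq:tsp-mts}); factoring out $L_n^{tsp} = \beta^{tsp}\sqrt{n|\mathcal{R}|}$ recovers the first equality $L_n^{tsp}\sum_k\sqrt{l_k/h}$.

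The main obstacle is making these two approximations rigorous and controlling their errors uniformly. On the geometric side, the decomposition into independent per-slot paths discards the overhead of the $m-1$ bridging arcs between consecutive slots; I would argue that these contribute only $O(m)$ to the total length (each bounded by the diameter of $\mathcal{R}$), which is negligible against $\sqrt{n}$ when $m$ is held fixed and $n\to\infty$. On the probabilistic side, the delicate point is exchanging expectation with the concave map $x\mapsto\sqrt{x}$ and summing the per-slot $\smallO(\sqrt{n_k})$ remainders: one must ensure that every $E(n_k)=n\,l_k/h\to\infty$ so that BHH is applicable in each slot, and that the fluctuations of $n_k$ around $E(n_k)$ do not accumulate across slots. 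For a fixed number of slots these follow from the concentration of the binomial together with the continuity of the square root, and a fully rigorous statement would express the error as $\smallO\big(\sqrt{n}\sum_k\sqrt{l_k/h}\big)$, consistent with the ITS case of Proposition~\ref{prop2} upon setting $l_k = h/m$.
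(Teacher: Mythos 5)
Your proof follows essentially the same route as the paper's: decompose the optimal tour into per-slot shortest Hamiltonian paths, treat the bridging arcs as closing arcs via assumption (iv), apply the BHH formula in each slot, and substitute the expected counts $E(n_k)=n\,l_k/h$. The paper's own proof is terser — it writes $L_{n}^{tsp-mts}=\sum_{k}L_{E(n_k)}^{tsp}$ directly without the chronological-ordering argument or the binomial concentration step you supply — so your version simply makes explicit two points the paper leaves implicit.
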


\begin{proof}
Since time slots are non-overlapping intervals, the optimal tour linking the realizations of $X_i$ in the asymptotic domain can be seen as a summation of optimal TSP tours, one for each time slot $A_k$ with a total of points of $E(n_k) = n \times l_k/h$ of the time slot and $l_k = |A_k|$. 
The bridge arc at the end of each time slot $A_k$ is considered to be equivalent to the arc closing the optimal Hamiltonian path of points of $A_k$ in order to be a circuit. 
Since the BHH theorem does not require knowing the specific positions of points to visit in addition to the assumption (iv), we could make use of this assumption.
Thus, we have
$$ L_{n}^{tsp-mts} = \sum_{k=1}^{m} L_{E(n_k)}^{tsp} 
= \beta^{tsp} \;\sqrt{\frac{n}{h}|\mathcal{R}|} \sum_{k=1}^{m} \sqrt{l_k}
= L_{n}^{tsp} \sum_{k=1}^{m} \sqrt{\frac{l_k}{h}}.$$
\end{proof}

In case of only one-time slot equal to the time horizon $[0,h]$, the TSP-$m$TS approximation corresponds to the TSP formula $L_{n}^{tsp-1ts} = L_{n}^{tsp}$.
In case of identical time slots, {\em i.e.} $l_k= h/m$, the satisfiability condition (\ref{eq:pareto2}) and the approximation (\ref{eq:tsp-mts}) correspond respectively to (\ref{eq:pareto}) and (\ref{eq:tsp-mits}). \\

\begin{prop}[\textbf{Bound of TSP-$m$TS}]
\label{prop5}
For the TSP-mTS random generation model with the time slots $(A_k)_{1 \leq k\leq m}$,
the asymptotic length $L_{n}^{tsp-mts}$ of the the tour linking $X_0, ...X_{n-1}$ under the  {\em Euclidean} metric in the
case of feasibility satisfies
\begin{equation}\label{bd:tsp-mts}
L_{n}^{tsp} \leq L_{n}^{tsp-mts} \leq L_{n}^{tsp-mits}.
\end{equation}
\end{prop}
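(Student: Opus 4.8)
The plan is to reduce the chain of inequalities to two elementary facts about the slot lengths $l_k$, using the closed forms already established. By Theorem~\ref{thm1}, Proposition~\ref{prop2}, and Proposition~\ref{prop4}, all three quantities share the common positive factor $\beta^{tsp}\sqrt{n\,|\mathcal{R}|}$:
$$L_n^{tsp} = \beta^{tsp}\sqrt{n\,|\mathcal{R}|},\quad L_n^{tsp-mts} = \beta^{tsp}\sqrt{n\,|\mathcal{R}|}\,\frac{1}{\sqrt{h}}\sum_{k=1}^m\sqrt{l_k},\quad L_n^{tsp-mits} = \beta^{tsp}\sqrt{n\,m\,|\mathcal{R}|}.$$
Dividing (\ref{bd:tsp-mts}) through by $\beta^{tsp}\sqrt{n\,|\mathcal{R}|}>0$, the claim becomes equivalent to the purely numerical statement
$$\sqrt{h}\ \le\ \sum_{k=1}^m\sqrt{l_k}\ \le\ \sqrt{m\,h}.$$
The only structural input I would use is that the slots partition $[0,h]$ (assumption (v)), so that $\sum_{k=1}^m l_k = h$.

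For the lower bound I would use the superadditivity of the square under the square root: since all $l_k\ge 0$,
$$\left(\sum_{k=1}^m \sqrt{l_k}\right)^2 = \sum_{k=1}^m l_k + 2\sum_{i<j}\sqrt{l_i\, l_j}\ \ge\ \sum_{k=1}^m l_k = h,$$
hence $\sum_{k=1}^m\sqrt{l_k}\ge\sqrt{h}$, which yields $L_n^{tsp}\le L_n^{tsp-mts}$. Equality holds exactly when a single slot carries the whole horizon, consistent with the one-slot degeneracy $L_n^{tsp-1ts}=L_n^{tsp}$ noted after Proposition~\ref{prop4}.

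For the upper bound I would apply the Cauchy--Schwarz inequality to the vectors $(\sqrt{l_k})_{1\le k\le m}$ and $(1,\dots,1)$:
$$\sum_{k=1}^m \sqrt{l_k}\cdot 1\ \le\ \sqrt{\sum_{k=1}^m l_k}\,\sqrt{\sum_{k=1}^m 1} = \sqrt{h}\,\sqrt{m} = \sqrt{m\,h},$$
so $L_n^{tsp-mts}\le L_n^{tsp-mits}$, with equality precisely when all $l_k$ are equal, that is the identical-slot case $l_k=h/m$ corresponding to $L_n^{tsp-mits}$. Combining the two bounds establishes (\ref{bd:tsp-mts}).

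There is no genuinely hard step here: the entire content is the observation that the geometric factor $\beta^{tsp}\sqrt{n\,|\mathcal{R}|}$ cancels, reducing the problem to comparing $\sum_k\sqrt{l_k}$ with $\sqrt{h}$ and $\sqrt{m\,h}$ under the constraint $\sum_k l_k=h$. The only point warranting a line of care is tracking the equality cases, so that the two extremes of the bound are correctly identified with the single-slot TSP and the identical-slot TSP-$m$ITS, respectively.
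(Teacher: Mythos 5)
Your proof is correct and follows essentially the same route as the paper: both reduce the claim to the scalar inequalities $\sqrt{h}\le\sum_{k}\sqrt{l_k}\le\sqrt{mh}$ via the closed forms and $\sum_k l_k=h$. The only differences are cosmetic --- you justify the lower bound by expanding the square rather than by recursion, and the upper bound by Cauchy--Schwarz rather than the AM--QM inequality (these are equivalent), and you additionally record the equality cases.
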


\begin{proof}
Given the lengths of the time slots $(l_k)_k$, the lower bound of (\ref{bd:tsp-mts}) is obtained by using the following inequality 
\begin{equation}\label{bd:tspdtw-1}
\sqrt{\sum_{k=1}^{m}l_k} \leq \sum_{k=1}^{m} \sqrt{l_k} 
\end{equation}
which can be proved by recursion. 
Using (\ref{eq:tsp-mts}) and $h=\sum_{k=1}^{m} l_k$, we have
$$
(\ref{bd:tspdtw-1}) 
\Longrightarrow \beta^{tsp} \;\sqrt{\frac{n}{h}|\mathcal{R}|} \sqrt{\sum_{k=1}^{m} l_k} \leq L_{n-1}^{tsp-mts} 
\Longrightarrow \beta^{tsp} \;\sqrt{n|\mathcal{R}|}= L_{n}^{tsp} \leq L_{n}^{tsp-mts}.
$$

The upper bound of (\ref{bd:tsp-mts}) is obtained by using the generalized mean inequality, specifically the inequality between the arithmetic and quadratic mean, which gives 
$$
\sum_{k=1}^{m} \sqrt{l_k} \leq \sqrt{m \sum_{k=1}^{m} l_k}=\sqrt{m\times h}.
$$
From (\ref{eq:tsp-mits}), the upper bound is then equal to
$$ \beta^{tsp} \;\sqrt{\frac{n}{h}|\mathcal{R}|} \;\sqrt{m\times h}= L_{n}^{tsp-mits}.$$
\end{proof}

\subsection{The TSP-TW}
In this subsection, we provide a general bound of the TSP-TW asymptotic approximation, based on an induced TSP-TS. We first define the time windows induced time slots that are used in the induced TSP-TS problem.\\

\begin{definition}[\textbf{TW induced TS}]
\label{def1}
Given the time windows $[b_i, f_i]$ of the set of points $\cal P$ and a time horizon $h$, we define the induced time slots as follows
\begin{enumerate}
    \item Sort all begin and end TW bounds of ${\cal P}$  by ascending order, from the set ${\cal S}_0 = \{b_1, f_1, ..., b_{n-1}, f_{n-1}\}$ to obtain the set ${\cal S}_1=\{c_0, c_1, ..., c_{2n-1}\}$, where $c_0= 0$ and $c_{2n-1}=h$.
    \item Define the time slots according to the ${\cal S}_1$ order, such that $A_{k}=[c_{k-1}, c_{k}],$ $k\in \{1,..,2n-1\}$. The $(2n-1)$ time slots $(A_k)$ constitute a partition of the time horizon. 
\end{enumerate}
\end{definition}
The total number of time slots is $2n-1$. 
This number can be decreased by one for each $c_{k-1}=c_{k},$ for all $1 \leq k\leq 2n-1$, which represent empty time slots.
It can be further decreased by removing time slots with no client assigned to them. 
In fact, each client with a time window $[b_i, f_i]$ can be served in one of the time slots $A_k$, where $j+1\leq k\leq j+l$, $c_j=b_i$, and $ c_{j+l} = f_i$, with $l \geq 1$, are the correspondent points of $b_i$ and $f_i$ in the ordered set ${\cal S}_1$.
Let us denote the final number of time slots as $m^*= 2n-1-m_1-m_2$, where $m_1$ is the number of duplicated $c_k$, and $m_2$ is the number of time slots with no client.\\

\begin{prop}[\textbf{Bound of TSP-TW}]
\label{prop6}
For the TSP-TW random generation model, we have 
\begin{equation}\label{bd:tsptw}
    L_{n}^{tsp} \leq L_{n}^{tsp-tw} \leq L_{n}^{tsp-m^* its},
\end{equation}
where $m^*= 2n-1-m_1-m_2$, with $m_1$ is the number of duplicated time windows bounds of ${\cal P}$, and $m_2$ is the number of time slots with no client affected to them.
\end{prop}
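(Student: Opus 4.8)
The plan is to sandwich $L_{n}^{tsp-tw}$ between a relaxation and a restriction: the plain TSP is the TSP-TW with its temporal constraints dropped, while the induced TSP-$m^*$TS of Definition~\ref{def1} is the TSP-TW with each client pinned to a single admissible slot. The lower bound comes from the first observation and the upper bound from the second, composed with Proposition~\ref{prop5}.

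For the lower bound $L_{n}^{tsp}\leq L_{n}^{tsp-tw}$, I would note that the plain TSP is exactly the formulation $(P_1)$ with the temporal restrictions $b_i\leq t_i\leq f_i-s_i$ removed. Deleting constraints enlarges the feasible set, so on every realization of $\{X_0,\dots,X_{n-1}\}$ the optimal plain-tour length is at most the optimal TSP-TW length. Passing to the asymptotic regime and using that $L_{n}^{tsp}$ from Theorem~\ref{thm1} is the limiting plain-TSP length yields the claim.

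For the upper bound, I would first build the induced time slots of Definition~\ref{def1}, collapsing duplicated breakpoints and discarding client-free slots to reach the $m^*=2n-1-m_1-m_2$ effective slots. The key structural fact is that the window $[b_i,f_i]$ of each client is tiled exactly by a contiguous block of induced slots; hence any assignment sending each client to one induced slot $A_k\subseteq[b_i,f_i]$ produces an induced TSP-$m^*$TS instance whose feasible tours all serve each $i$ inside a subinterval of $[b_i,f_i]$ and are therefore feasible for the TSP-TW. Since the induced-TS feasible set is thus contained in the TSP-TW feasible set, minimizing over the smaller set gives $L_{n}^{tsp-tw}\leq L_{n}^{tsp-m^*ts}$. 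Chaining this with the upper half of Proposition~\ref{prop5} applied to the induced slots, $L_{n}^{tsp-m^*ts}\leq L_{n}^{tsp-m^*its}$, closes the bound~(\ref{bd:tsptw}).

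The main obstacle I anticipate is reconciling the induced TSP-$m^*$TS with the random generation model underlying Propositions~\ref{prop4} and~\ref{prop5}. Those results assume clients are assigned to slots proportionally and i.i.d., whereas here the admissible slots of each client are dictated by its realized window and the induced slot lengths are themselves random breakpoints. The delicate point is to argue---either in expectation over the i.i.d. window draws or by exhibiting a representative assignment---that the per-slot occupancies still match $E(n_k)=n\,l_k/h$, so that the Proposition~\ref{prop4} formula and the quadratic-mean step of Proposition~\ref{prop5} remain applicable to the induced configuration. By comparison, the bookkeeping of the bridging arcs in Figure~\ref{fig0-satis} (empty first bridge, returning last arc) and the verification that collapsing duplicated or empty slots preserves the partition of $[0,h]$ are routine.
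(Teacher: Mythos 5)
Your proposal follows essentially the same route as the paper: the lower bound by observing that the plain TSP is a relaxation of the TSP-TW, and the upper bound by noting that the induced TSP-$m^*$TS of Definition~\ref{def1} is a restriction of the TSP-TW (every feasible induced-TS tour serves each client within a sub-interval of its window), then chaining with the TS-versus-ITS inequality of Proposition~\ref{prop5}. The obstacle you flag about reconciling the induced slot occupancies with the proportional i.i.d.\ assignment assumed by Propositions~\ref{prop4} and~\ref{prop5} is a real subtlety that the paper's own proof passes over rather than resolves, so your treatment is, if anything, slightly more careful on that point.
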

\begin{proof}
The lower bound can be easily derived.
Given a random instance $p$ of the TSP-TW,
let $r$ be the optimal route of the corresponding TSP, {\em i.e.} same problem configuration of the TSP-TW without the time windows, with a duration $d_r$. 
For all feasible tours $r'$ of $p$, 
we have by definition $d_r \leq d_{r'}$. 
Then, asymptotically $L_{n}^{tsp} \leq L_{n}^{tsp-tw}$.

For the upper bound, we consider the induced TSP-$m^*$TS problem $p'$ of $p$, which corresponds to the problem with the same settings of $p$ apart for time windows considerations. 
They are taken to be the time slots of Definition~\ref{def1}.
The approximation of the TSP-$m^*$TS does not require knowing the exact assignment of points to the time slots (see Proposition ~\ref{prop4}), in the similar way that the BHH formula does not require knowing the exact spatial locations of ${\cal P}_{all}$. 
Suppose $r^*$ is the optimal solution of the TSP-TW instance $p$.
This solution can be reached for one specific assignment of ${\cal P}$ to the induced time slots.
Actually, the problem $p'$ is more constrained than $p$, as each point is assigned to a time slot smaller or equal to its original time window. 
Thus, we have 
\begin{equation}\label{bd:tsptw1}
    L_{n}^{tsp-tw} \leq L_{n}^{tsp-m^*ts}.
\end{equation}

Across all time slots configurations, the worst case is attained for identical TS, according to Proposition~\ref{prop5}:
 $$L_{n}^{tsp-tw} \leq L_{n}^{tsp-m^*its}.$$

Concerning the problem satisfiability, if $p$ is not feasible, $p'$ will be as well as it is more constrained:
$ L_{n}^{tsp-tw} =+\infty  \Longrightarrow L_{n-1}^{tsp-m^*its}=+\infty.$
\end{proof}

The inequality (\ref{bd:tsptw}) implies  
$$ 1 \leq \frac{L_{n}^{tsp-tw}}{L_{n}^{tsp}} \leq  \frac{L_{n}^{tsp-m^*its}}{L_{n}^{tsp}} \leq  \frac{L_{n}^{tsp-(2n-1)its}}{L_{n}^{tsp}} = \mathcal{O}(\sqrt{n}),$$ 
meaning that the upper bound ratio over the TSP approximation has a slow growth rate. 

However, the upper bounds (\ref{bd:tsptw}) and (\ref{bd:tsptw1}) are generally not tight, as the number of points assigned to each time slot can be quite small, because asymptotic approximations are unsuitable when the number of points is small.
At the opposite, when the points ${\cal P}$ have large time windows and are densely located in time, the upper bounds can be of a better quality.\newline 

\textbf{Remark.} to provide an approximation of the TSP-TW, one way could be to relax some bounds of the time windows such that the TW-induced TS can be large enough to encompass several points, more than the strict initial configuration.  
By doing so, this approximation is not an upper bound anymore, and is rather an estimation of the TSP-TW asymptotic approximation.
Which time windows bounds of ${\cal P}$ to consider for relaxation is still an open question.
The estimation of the asymptotic approximation can be given by
$
L_{n}^{tsp-tw} \approx L_{n}^{tsp-mts} = \beta^{tsp} \; \sqrt{\frac{n\;|\mathcal{R}|}{h}} \sum_{k=1}^{m}\sqrt{l_k},
$
where $(l_k)_{1 \leq k\leq m}$ are the lengths of the relaxed induced time slots of the TSP-TW problem.

\section{The worst-case demand}\label{sec:wc}
In this section, we relax the uniform distribution hypothesis of the instances for more realistic consideration.
We first explicit the motivations and the choice of the distribution,
then provide the expression of these distributions for both time and space.
Finally, the satisfiability condition and the asymptotic approximation of the TSP-$m$ITS are given accounting for those distributions. 

\subsection{Motivations and the choice of the distribution}
The approximations of the previous section rely on  uniform distributions, which assign equal probabilities to customers' requests whether in terms of space or time (distribution on the time slots).
This is rather a general assumption.
The distribution of customers' demand is known to vary considerably during the day, with a low demand during the noon period for instance.
Variations are also spatial as city centers and high-density metropolitan areas attract more demands.
If we happen to learn some features of the demand distribution, which is generally the case of companies and services using historical data, being able to include this additional information in the choice of the temporal and the spatial demand distributions is beneficial in order to improve the asymptotic approximations of the routing problems.

The actual demand distribution (for which the routing problem is solved) is  not completely known in advance, although some partial statistical features could be discovered.
To face this underlying uncertainty, predictive models can be used to construct the global demand distribution, as done in \cite{iglesias2018data}. 
Another approach follows the distributionally robust optimization, in which optimization is performed against the worst-case realization of the unknown distribution \cite{rahimian2019distributionally}.
This approach falls under the paradigm of robust optimization \cite{ben2009robust}, which becomes popular for optimizations under uncertainty as it constructs computationally tractable robust counterparts of problems.
Several statistical methods exist to estimate the worst-case distribution: the maximum likelihood estimator (MLE) \cite{myung2003tutorial}, the principle of maximum entropy (ME) \cite{conrad2004probability}, and the minimum Hellinger distance estimator \cite{beran1977minimum}.
Carlson and Behzoodi \cite{carlsson2017worst} have generated a worst-case spatial demand distribution for capacitated VRP by maximizing the upper bound of the VRP tour length given by Haimovich and Rinnoy Kan \cite{haimovich1985bounds}. 
We opted for the maximum entropy distribution due to its intuitive nature, and its higher probabilistic properties compared to the MLE as smoothing techniques are not needed\footnote{The MLE and the ME approaches are equivalent for the estimation of natural discrete distributions given a number of moments \cite{golan1998maximum}.}. 

The entropy introduced by Shannon \cite{shannon2001mathematical} is a measure of the degree of disorder or uncertainty in a random variable.
A low entropy indicates that the system representing the random variable is organized, can be easily predicted, and needs fewer bits to be represented. At the opposite, a high entropy system tends to be highly dispersed, difficult to predict, and needs considerable amounts of information in order to be stocked.
In the event of unknown demand, we could argue that the worst case corresponds to a notoriously difficult to predict scenario in terms of customers' appearance in both time and space. 
The element of surprise during the transportation operation has to be at its maximum level.
Subsequently, the requests of customers would tend to follow and be induced by a geographical and temporal distribution with high entropy.
In this respect, maximum entropy distributions on the studied area $\mathcal{R}$ and for time windows consideration are a suitable choice  for worst case demand distributions.

The estimation method of ME distribution is based on maximizing of the entropy measure under a given number of first moments.
If no moment constraint is imposed, the distribution with the highest entropy corresponds to the uniform distribution. We constrain our study to the first two moments, and to the study of TSP-ITS case. 

\subsection{Worst-case spatial demand}
The spatial demand distribution $f(.)$ is a continuous two dimensional distribution for which the mean $\mu$ and the covariance matrix $\Sigma \succ 0$ are given. 
$f(.)$ is defined on the square area $\mathcal{R}=[0,a] \times [0,a]$.
The optimization problem aimed to find $f(.)$ is as follow
\begin{align} 
\max_{f(.)} \quad & H(f)= - \int_{\mathcal{R}}f(x,y)\;\log f(x,y)\; dx\; dy, \label{opt1-1}\\ 
\textrm{s.t.} \quad & \mathbf{\mu} = \int_{\mathcal{R}} \icol{x\\y} \;f(x,y)\; dx\;dy \label{opt1-2}\\
& \Sigma + \mathbf{\mu}\mathbf{\mu}^T = \int_{\mathcal{R}} \icol{x\\y}\icol{x\\y}^T \;f(x,y)\; dx\;dy \label{opt1-3}\\
&\int_{\mathcal{R}}f(x,y)\; dx\;dy = 1  \label{opt1-4} \\
  &f(x,y)\geq 0\quad \quad \forall (x, y) \in \mathcal{R},   \label{opt1-5}
\end{align}
where $(\ref{opt1-1})$ is the entropy function, $(\ref{opt1-2})$ is the constraint on the first moment $\mu$ of $f(.)$, and $(\ref{opt1-3})$ is the constraint on the second moment of $f(.)$, the covariance matrix $\Sigma$.
The probability distribution constraints are given by $(\ref{opt1-4})$ and $(\ref{opt1-5})$. 
Applying the Lagrange multipliers method to this problem leads to the following exponential density function, 
\begin{equation}\label{dens1}
    f(x,y)= \exp \{\nu-1+\lambda^T\icol{x\\y}+ \icol{x\\y}^T Q_f \icol{x\\y}\},
\end{equation}
where $\nu\in\mathbb{R}$, $\lambda\in\mathbb{R}^2$, and $Q_f\succeq 0$ are Lagrange multipliers, 
which are associated respectively to the constraints (\ref{opt1-4}), (\ref{opt1-2}) and  (\ref{opt1-3}).
These multipliers can be determined by solving the equations 
(\ref{eqLagrange1})-(\ref{eqLagrange3}) corresponding to the constraints (\ref{opt1-2})-(\ref{opt1-4}) in which the function $f(.)$ of (\ref{dens1}) is replaced:
\begin{align} 
&\mathbf{\mu} = \int_{\mathcal{R}} \icol{x\\y} \;e^{\nu-1+\lambda^T\icol{x\\y}+ \icol{x\\y}^T Q_f \icol{x\\y}}\; dx\;dy \label{eqLagrange1}\\
& \Sigma + \mathbf{\mu}\mathbf{\mu}^T = \int_{\mathcal{R}} \icol{x\\y}\icol{x\\y}^T \;e^{\nu-1+\lambda^T\icol{x\\y}+ \icol{x\\y}^T Q_f \icol{x\\y}}\; dx\;dy \label{eqLagrange2}\\
&\int_{\mathcal{R}}e^{\lambda^T\icol{x\\y}+ \icol{x\\y}^T Q_f \icol{x\\y}}\; dx\;dy = e^{1-\nu}   \label{eqLagrange3}
\end{align}

The maximum entropy distribution (\ref{dens1}) exists and is unique for one dimension, see \cite{dowson1973maximum}. 
In case the support of the distribution $\mathcal{R}$ is the real domain $\mathbb{R}^2$, the ME distribution corresponds to the bi-variate normal distribution \cite{conrad2004probability}.  

\subsection{Worst-case temporal demand}
The ME distribution for the temporal demand follows the same idea of the spatial counterpart, expect that the temporal density function $g(.)$ is discrete and univariate.
We split the time horizon to equal size time slots, which represent the periods of the day. 
The worst-case computation here is done on the position of the time slot.
This is why we omitted the case of TS with various lengths.
The support of the distribution $g(.)$ is $\mathcal{S}=\{0,1, .. m\}$, which incorporates the $m$ time slots and the case of not choosing any TS, {\em i.e.} the case 0. 
We let $g_i=g(l_i),\;\;\forall i = 0, ..,m$, 
The problem to optimize is as follows

\begin{align}
\max_{g} \quad & H(g)= - \sum_{i=0}^{m} g_i\;\log g_i, \label{opt2-1}\\ 
\textrm{s.t.} \quad & 
\mu = \sum_{i=0}^{m} i \times g_i \label{opt2-2}\\
& \sigma^2 + \mu^2 = \sum_{i=0}^{m} i^2 \times g_i \label{opt2-3}\\
&\sum_{i=1}^{m} g_i = 1  \label{opt2-4} \\
&g_i\geq 0\quad \quad \forall i = 0, ..,m. \label{opt2-5}
\end{align}

This problem without the constraint (\ref{opt2-3}) of the variance $\sigma^2$ has a unique solution corresponding to the binomial distribution \cite{harremoes2001binomial}.
For simplicity of derivation, compared to solving for all constraints, we constrain the study  to the first moment $\mu$.
This value will provide an indication of the median TS. 
For instance, if the distribution $g(.)$ is uniform, then $\mu = (m+1)/2$.

\begin{table}[hptb]
    \centering
\noindent \begin{tabular}{c|c|c|}
f(.) $\backslash$ g(.)  & moment constraints: None  & $\mu_{g}=m\;p_{b}$ \\\hline
&&\\
None &  uniform distr. f(.)& uniform distr. f(.)\\
& uniform distr. g(.)& binomial distr. g(.) \\ \hline
&&\\
$\mu_{f}$ and $\Sigma_{f}$ & exponential distr. f(.) &  exponential distr. f(.) \\  
&uniform distr. g(.)&  binomial distr. g(.)\\\hline
\end{tabular} 
\caption{Four worst-case distribution, where the first moment of $g(.)$ is $\mu_{g}=m\;p_{b}$, with $p_{b}$ is the probability of successes of the binomial distribution, and $\mu_{f}$ and $\Sigma_{f}$ are respectively the first and second moment of $f(.)$.}
\label{tab2}
\end{table}

\subsection{Worst-case asymptotic approximations}
Depending on the moment values for the spatial $f(.)$ and the temporal $g(.)$ demand distributions, four combinations of the worst-case distribution can be given, as shown in Table~\ref{tab2}. Subsequently, four approximations can be stated for the tour length of the TSP-$m$ITS case.
The equivalent version of the satisfiability condition is given as follows. \newline


\noindent \textbf{Satisfiability TSP-$m$ITS under ME}
For the TSP-$m$ITS random generation model, wherein the demand distributions f(.) and g(.) follow the ME principle, the satisfiability condition of feasible tours linking the realisations of $X_0, ...X_{n-1}$ under the {\em Euclidean} metric satisfies on average,
\begin{center}
 \begin{tabular}{c|c|c|}
f(.) $\backslash$ g(.)  & None  & $\mu_{g}$ \\\hline
&&\\
None &  $n\times m  = $ & $n\times f_1(\mu_g,m)  = $ \\
&$h^2 /\big((\beta^{tsp})^2\;|\mathcal{R}|\big)$ &  $h^2 /\big((\beta^{tsp})^2\;|\mathcal{R}|\big)$\\ \hline
&&\\
$\mu_{f}$ and $\Sigma_{f}$ & $n\times m = $ &  $n\times f_1(\mu_g,m) = $ \\  
& $h^2 /\big(\beta^{tsp}\;F(\mu_{f}, \Sigma_{f})\big)^2$ 
& $h^2 /\big(\beta^{tsp}\;F(\mu_{f}, \Sigma_{f})\big)^2$  \\ \hline
\end{tabular}
\end{center}
\noindent such that, $\mu_g\in [|1,k|]$,
and $$F(\mu_{f}, \Sigma_{f})=\int_{\mathcal{R}} e^{(\nu-1+\lambda^T\icol{x\\y}+ \icol{x\\y}^T Q_f \icol{x\\y})/2} dx dy,$$ where $\nu\in\mathbb{R}$, $\lambda\in\mathbb{R}^2$ and $Q_f\succeq 0$ satisfy (\ref{eqLagrange1}), (\ref{eqLagrange2}) and (\ref{eqLagrange3}), and  
$$ f_1(\mu_g,m) = m^2\times{m \choose \mu_g}\big(\frac{\mu_g}{m}\big)^{\mu_g} \big(1-\frac{\mu_g}{m}\big)^{m-\mu_g}.$$

A similar analysis to the satisfiability condition (\ref{eq:pareto2}) of the uniform case is used here, tacking into account the new expression of the building block of the TSP asymptotic approximation modified under ME. 
When the constraint $\mu_{f}$, $\Sigma_{f}$, and $\mu_{g}$ are imposed, the BHH formula for the $k$-th time slot becomes  
$$
\frac{L_{E(n_k)}^{tsp}}{\sqrt{E(n_k)}}  \xrightarrow[n\to \infty]{} \beta^{tsp} \;F(\mu_{f}, \Sigma_{f}),
$$
where $E(n_k)$ is the expected number of points assigned to the $k$-th time slot.
Let $Prob(k;m;p_b)$ be the probability mass function of the binomial distribution assigning a point to a time slot $k$, such that $p_b=\mu_g/m$ is the  probability of success.
We have 
\begin{align*}
    E(n_k) &= n \times Prob(k;m;p_b) 
    =n \times {m \choose k} \big(\frac{\mu_g}{m}\big)^k \big(1-\frac{\mu_g}{m}\big)^{m-k}.
\end{align*}
$E(n_k)$ is maximized for $k=p_b\;m=\mu_g$.
The equality to obtain the satisfiability condition is the following
$$ L_{E(n_k)}^{tsp}= \beta^{tsp} \;F(\mu_{f}, \Sigma_{f})\sqrt{n\;Prob(\mu_g;m;\frac{\mu_g}{m})}= h/m.$$
In case $\mu_{g}$ is not imposed, then $E(n_k)=n/m$. We can solve for this value.
In case $\mu_{f}$, $\Sigma_{f}$ are not imposed, then $F(\mu_{f}, \Sigma_{f}) = \sqrt{|\mathcal{R}|}.$\\

\begin{prop}[\textbf{Approximation TSP-$m$ITS under ME}]
\label{prob-approx-wc}
For the TSP-$m$ITS random generation model, wherein the distributions of f(.) and g(.) follow the ME principle,
the asymptotic optimal length $L_{n-1}^{tsp-mits}$ of the tour linking $X_0, ...X_{n-1}$ under the  {\em Euclidean} metric if feasible is equal to
\begin{center}
\noindent \begin{tabular}{c|c|c|}
f(.) $\backslash$ g(.)   & None  & $\mu_{g}$ \\\hline
&&\\
None & $\beta^{tsp}  \sqrt{n\;m\;|\mathcal{R}|}$ & $\beta^{tsp} \; \sqrt{n\;f_2(\mu_g,m)\;|\mathcal{R}|}$  \\ \hline
&&\\ 
$\mu_{f}$ and $\Sigma_{f}$ & $\beta^{tsp}  \sqrt{n\;m}\; F(\mu_{f}, \Sigma_{f})$ & $\beta^{tsp} \; \sqrt{n\;f_2(\mu_g,m)}\; F(\mu_{f}, \Sigma_{f})$ \\  \hline
\end{tabular}
\end{center}

\noindent such that, $\mu_g\in [|1,k|]$, $n/Q \in \mathbb{N}$, $l\times Q \geq n$, and
$$F(\mu_{f}, \Sigma_{f})=\int_{\mathcal{R}} e^{(\nu-1+\lambda^T\icol{x\\y}+ \icol{x\\y}^T Q_f \icol{x\\y})/2} dx dy,$$ 
where $\nu\in\mathbb{R}$, $\lambda\in\mathbb{R}^2$ and $Q_f\succeq 0$ satisfy (\ref{eqLagrange1}), (\ref{eqLagrange2}) and (\ref{eqLagrange3}), and 
$$  f_2(\mu_g,m) = \sum_{k=1}^{m}\sqrt{{m \choose k} \big(\frac{\mu_g}{m}\big)^{k} \big(1-\frac{\mu_g}{m}\big)^{m-k}}. 
$$
\end{prop}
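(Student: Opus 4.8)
The plan is to treat the four cells of the table as a single master computation equipped with two independent switches --- one for the spatial law (uniform versus the maximum-entropy density~(\ref{dens1})) and one for the temporal law (uniform versus binomial) --- and then read off each entry as a specialization. The backbone is the additive decomposition already used in Proposition~\ref{prop4}: in the asymptotic regime the optimal TSP-$m$ITS tour splits into one optimal TSP sub-tour per time slot, the bridging arc of each slot being absorbed into the closing arc of that sub-tour by assumption~(iv). This yields the master identity
\begin{equation*}
L_{n}^{tsp-mits} = \sum_{k=1}^{m} L_{E(n_k)}^{tsp} = \beta^{tsp}\, G \sum_{k=1}^{m} \sqrt{E(n_k)} + \smallO\big(\sqrt{n}\big),
\end{equation*}
where the scalar $G$ carries the spatial information and the counts $E(n_k)$ carry the temporal information; the rest is bookkeeping.

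For the spatial factor I would apply Theorem~\ref{thm1} slot by slot. When $\mu_f,\Sigma_f$ are imposed, the points of each slot are drawn from the ME density~(\ref{dens1}), so $\sqrt{f(x,y)}$ is the exponential of half the exponent appearing in~(\ref{dens1}), and the BHH integral $\int_{\mathcal{R}}\sqrt{\overline f}$ collapses \emph{exactly} to $G=F(\mu_f,\Sigma_f)$. When no spatial moment is imposed, $f$ is uniform on $\mathcal{R}$ and the same integral gives $G=\sqrt{|\mathcal{R}|}$, which is precisely the substitution $F(\mu_f,\Sigma_f)\mapsto\sqrt{|\mathcal{R}|}$ relating the two rows of the table. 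This step is just Theorem~\ref{thm1} read with a non-uniform density and needs no new argument beyond recognizing the integral.

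For the temporal factor I would compute $E(n_k)$ from the slot-occupancy law. Under the binomial temporal ME distribution with success probability $p_b=\mu_g/m$ one has $E(n_k)=n\binom{m}{k}(\mu_g/m)^k(1-\mu_g/m)^{m-k}$, while in the unconstrained case $E(n_k)=n/m$. Substituting into the master identity gives $\sum_k\sqrt{E(n_k)}=\sqrt{n}\,\sum_k\sqrt{\mathrm{Prob}(k;m;p_b)}$, and the final step is the purely algebraic identification of this sum with the $\sqrt{n\,f_2(\mu_g,m)}$ form of the table. I would pin down the precise normalization of $f_2$ by specializing to the uniform temporal case, where $E(n_k)=n/m$ forces $\sum_k\sqrt{E(n_k)}=m\sqrt{n/m}=\sqrt{nm}$ and recovers the first column $\beta^{tsp}\sqrt{nm}\,G$; the four cells then follow by combining the two switches. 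This normalization check is the place where the square-root grouping must be handled with care.

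The main obstacle is rigor in the two approximations hidden in the master identity. First, I replace the random slot counts $n_k$ by their means $E(n_k)$; justifying this needs the concentration of $n_k\sim\mathcal{B}(n,p_k)$ about its mean --- the same central limit argument invoked for the satisfiability condition --- together with the fact that $x\mapsto\sqrt{x}$ is slowly varying near the mean. Second, and more delicate, I must control the aggregate of the $m$ per-slot remainders: each slot contributes $\smallO\big(\sqrt{E(n_k)}\big)$, and I would argue that, with $m$ fixed and every $E(n_k)\to\infty$ as $n\to\infty$, the sum of these remainders is dominated by the stated leading term. The validity of the additive decomposition itself --- that the inter-slot bridging arcs are asymptotically negligible once folded into the closing arcs via assumption~(iv) --- is inherited from Proposition~\ref{prop4} and is where the derivation remains heuristic rather than fully measure-theoretic. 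The feasibility side conditions listed in the statement enter only to guarantee integer, feasible slot occupancies, so that per-slot BHH is applied to a well-defined point set.
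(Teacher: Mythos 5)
Your proposal follows essentially the same route as the paper: decompose the tour into one BHH sub-tour per time slot as in Proposition~\ref{prop4}, replace the spatial constant by $F(\mu_f,\Sigma_f)$ (or $\sqrt{|\mathcal{R}|}$) via Theorem~\ref{thm1} applied to the ME density, and plug in the binomial expected counts $E(n_k)=n\,\mathrm{Prob}(k;m;\mu_g/m)$ (or $n/m$), which is exactly the paper's one-line computation. Your flagged normalization concern is well placed --- the derivation yields $\beta^{tsp}F\,\sqrt{n}\,f_2(\mu_g,m)$, so the table's grouping $\sqrt{n\,f_2(\mu_g,m)}$ is consistent with the proof only if $f_2$ there denotes the square of the sum defined in the statement --- but this is a notational wrinkle in the paper, not a gap in your argument.
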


\begin{proof} Similar proof to that of proposition~\ref{prop4}. When the constraint $\mu_{f}$, $\Sigma_{f}$ and $\mu_{g}$ are imposed, the approximation is equal to the following summation 

\begin{align*}
    L_{n}^{tsp-mits} &= \sum_{k=1}^{m} L_{E(n_k)-1}^{tsp} = \beta^{tsp}\;F(\mu_{f}, \Sigma_{f}) \sum_{k=1}^{m}  \sqrt{n\;Prob(k;m;\frac{\mu_g}{m})}\\
&= \beta^{tsp}\;F(\mu_{f}, \Sigma_{f})\sqrt{n} \sum_{k=1}^{m} \sqrt{{m \choose k} \big(\frac{\mu_g}{m}\big)^k \big(1-\frac{\mu_g}{m}\big)^{m-k}},
\end{align*} 
where $E(n_k)$ is the expected number of points assigned to the $k$-th time slot, and $Prob(k;m;\mu_g/m)$ is the probability mass function of the binomial distribution assigning a point to visit to the $k$-th time slot, where $p_b=\mu_g/m$ is the  probability of success.
\end{proof}
Notice that through arithmetic mean - quadratic mean (AM-QM) inequality, we have $f_2(\mu_g,m) \leq \sqrt{m}$, meaning that the TSP-$m$ITS approximation when the mean $\mu_g$ is imposed is always lesser or equal to that for uniform temporal distribution.

\section{Solving approach}
Solving approaches of the TSP-TW are mostly designed to take advantage of the tightness of the time windows, {\em e.g.} \cite{baldacci2012new}. 
They do not perform efficiently in case the time window constraints are quite large when for instance equaling half or a third of the time horizon. 
The most efficient relaxations of the TSP-TW, namely $t$-Tour and $ng$-Tour relaxations, take advantage of the tightness of the time windows in order to calculate higher lower bounds that are used in the branch-and-price approaches. 
Since the time windows of the TSP-TS have a specific structure of non-overlapping, we can take advantage of this property in order to propose an adapted approach to solve the problem to optimality.
In our approach, we first construct an acyclic directed graph, which uses a TSP solver to obtain the lengths of Hamiltonian paths inside each time slot. 
In the second step, we run a shortest path algorithm on this graph to generate the optimal tour for the instance. In this way, we could exploit the non-overlapping property of time windows.

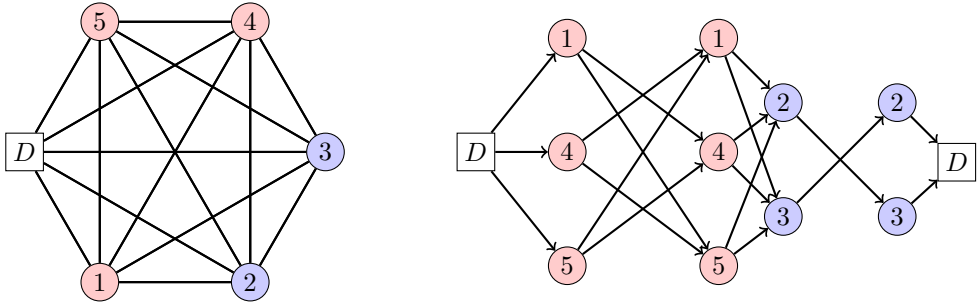
\begin{figure}[h!]
  \centering
  \begin{tikzpicture}
    \node[draw,minimum size=0.5cm,inner sep=0pt] (D) at ({360/6 * (4- 1)}:2cm) {$D$};
    \node[circle,fill=red!20,draw,minimum size=0.5cm,inner sep=0pt] (1) at ({360/6 * (5- 1)}:2cm) {$1$};
    \node[circle,fill=blue!20,draw,minimum size=0.5cm,inner sep=0pt] (2) at ({360/6 * (6- 1)}:2cm) {$2$};
    \node[circle,fill=red!20,draw,minimum size=0.5cm,inner sep=0pt] (5) at ({360/6 * (3- 1)}:2cm) {$5$};
    \node[circle,fill=red!20,draw,minimum size=0.5cm,inner sep=0pt] (4) at ({360/6 * (2- 1)}:2cm) {$4$};
    \node[circle,fill=blue!20,draw,minimum size=0.5cm,inner sep=0pt] (3) at ({360/6 * (1- 1)}:2cm) {$3$};

    \path[draw,thick]
    (D) edge node {} (1)
    (D) edge node {} (2)
    (D) edge node {} (3)
    (D) edge node {} (4)
    (D) edge node {} (5)
    
    (1) edge node {} (D)
    (1) edge node {} (2)
    (1) edge node {} (3)
    (1) edge node {} (4)
    (1) edge node {} (5)
    
    (2) edge node {} (1)
    (2) edge node {} (D)
    (2) edge node {} (3)
    (2) edge node {} (4)
    (2) edge node {} (5)
    
    (3) edge node {} (1)
    (3) edge node {} (2)
    (3) edge node {} (D)
    (3) edge node {} (4)
    (3) edge node {} (5)
    
    (4) edge node {} (1)
    (4) edge node {} (2)
    (4) edge node {} (3)
    (4) edge node {} (D)
    (4) edge node {} (5)
    
    (5) edge node {} (1)
    (5) edge node {} (2)
    (5) edge node {} (3)
    (5) edge node {} (4)
    (5) edge node {} (D);

    \begin{scope}[xshift=4cm]
    \node[draw,minimum size=0.5cm,inner sep=0pt] (D_1) {$D$};

    \node[circle,fill=red!20,draw,minimum size=0.5cm,inner sep=0pt] (4_1) [right= 0.7cm of D_1]  {$4$};
    \node[circle,fill=red!20,draw,minimum size=0.5cm,inner sep=0pt] (1_1) [above = 1cm of 4_1] {$1$};
    \node[circle,fill=red!20,draw,minimum size=0.5cm,inner sep=0pt] (5_1) [below =1cm of 4_1]  {$5$};
    \node[circle,fill=red!20,draw,minimum size=0.5cm,inner sep=0pt] (1_2) [right=1.5cm of 1_1]  {$1$};
    \node[circle,fill=red!20,draw,minimum size=0.5cm,inner sep=0pt] (4_2) [right=1.5cm of 4_1] {$4$};
    \node[circle,fill=red!20,draw,minimum size=0.5cm,inner sep=0pt] (5_2) [right=1.5cm of 5_1] {$5$};
    \node[circle,fill=blue!20,draw,minimum size=0.5cm,inner sep=0pt] (2_1) [below right=0.7cm of 1_2] {$2$};
    \node[circle,fill=blue!20,draw,minimum size=0.5cm,inner sep=0pt] (3_1) [below=1cm of 2_1] {$3$};
    \node[circle,fill=blue!20,draw,minimum size=0.5cm,inner sep=0pt] (2_2) [right=1cm of 2_1] {$2$};
    \node[circle,fill=blue!20,draw,minimum size=0.5cm,inner sep=0pt] (3_2) [right=1cm of 3_1] {$3$};
    \node[draw,minimum size=0.5cm,inner sep=0pt] (D_2)  [below right=0.5cm of 2_2]  {$D$};

    \path[->, draw,thick]
    (D_1) edge node {} (1_1)
    (D_1) edge node {} (4_1)
    (D_1) edge node {} (5_1)
    (1_1) edge node {} (4_2)
    (1_1) edge node {} (5_2)
    (4_1) edge node {} (1_2)
    (4_1) edge node {} (5_2)
    (5_1) edge node {} (1_2)
    (5_1) edge node {} (4_2)
    (1_2) edge node {} (2_1)
    (1_2) edge node {} (3_1)
    (4_2) edge node {} (2_1)
    (4_2) edge node {} (3_1)
    (5_2) edge node {} (2_1)
    (5_2) edge node {} (3_1)
    (2_1) edge node {} (3_2)
    (3_1) edge node {} (2_2)
    (2_2) edge node {} (D_2)
    (3_2) edge node {} (D_2);
    \end{scope}
    
    \end{tikzpicture}
    \caption{An example of the DAG of a TSP-TS instance of $m=2$ time slots. The red (resp. blue) color represents points of the first (resp. second) time slot. $D$ is the depot.}
    \label{fig:expl_graph}
\end{figure}

\begin{algorithm}[h!]
\caption{The DAG construction algorithm.}\label{algo1}
\KwData{a TSP-TS instance ($n$=number of points to visit,   $m$=number of time slots, $c_{ij}$=travel cost from point $i$ to point $j$, $ts_i$=the corresponding time slots of point $i$).}
\KwResult{A directed acyclic graph $G$.}
$V\longleftarrow  \emptyset$\;
$E \longleftarrow  \emptyset$\;
$n_G \longleftarrow 2 \times n$\;
Add vertices $v_1(=0)$ and $v_{n_G}(=0)$ to $V$ (depot=0)\;
$l \longleftarrow $ 2\;
\For{time slot $k$ in $[1, m]$}{
\For{points $i, j$ s.t. $TS(i)=TS(j)=k$ and $i \neq j$}{
Add vertices $v_{l}(=i)$ and $v_{l+1}(=j)$ to $V$\; 
Add arcs $(v_{l}, v_{l+1})$ to $E$ with the cost:\:
Dist[$v_{l}$ ; $v_{l+1}$] $\longleftarrow$ cost of Hamiltonian path  from $v_{l}$ to $v_{l+1}$\;
}
}
Add arcs $(v_1, v_i)$ to $E$, for all points $v_i$ s.t. $TS(v_i) =1$ with the costs Dist[$v_1$ ; $v_i$] $=c_{0, j}$ where $j=Idx(v_i)$\;
\For{time slot $k$ in $[1, m]$}{
Add arcs $(v_i, v_j)$ to $E$, for all points $v_i$ and $v_j$ s.t. $v_i$ has no outgoing arc, $v_j$ has no incoming arc,  $TS(v_i) =k$ and $TS(v_j) =k+1$ with the costs Dist[$v_i$ ; $v_j$] $=c_{j1, j2}$ where $j1=Idx(v_i)$ and $j2=Idx(v_j)$\;
}
Add arcs $e_{v_i, v_{n_G}}$ to $E$, for all points $v_i$ s.t. $TS(v_i) =m$ with the costs Dist[$v_i$ ; $v_{n_G}$] $=c_{j, 0}$ where $j=Idx(v_i)$\;
return $G=(V, E)$\;
\end{algorithm}

The construction of the graph of a TSP-TS instance is given by Algorithm~\ref{algo1}.
It outputs a directed acyclic graph (DAG), with one unique source and one unique sink vertex. Both correspond to the depot of the input instance. 
The travel cost from $i$ to $j$ of an arc in the graph, termed Dist[i,j],  corresponds to the original instance cost $c_{ij}$ if the points have different time slots, {\em i.e.} $TS(j)=TS(i)+1$. 
In case the points $i$ and $j$ belong to the same time slot, {\em i.e.} $TS(j)=TS(i)$, Dist[i,j] corresponds to the shortest Hamiltonian path length going from $i$ to $j$ and passing through all the remaining points of the time slot.
Arcs from $i$ to $j$ such as $TS(i) \neq TS(j)$ and  $TS(j) \neq TS(i)+1$ are not considered in our graph as they correspond to unfeasible paths. 
The function $Idx(v)$ tracks the index of a vertex $v$ in the graph  in the original instance.
An example DAG graph is given in Figure~\ref{fig:expl_graph}.

\begin{algorithm}[h!]
\caption{The TSP-TS shortest path resource constraint algorithm.}\label{algo}
\KwData{G(V,E): the DAG of the input instance given by Algorithm~\ref{algo1}.}
\KwResult{The shortest path length from source(G) to sink(G).}
 BKS = GreedyPath(G)\;
 N = CreateSubPath(PathLength=0, ArriveTime=0, Binf=$-\infty$, LastVertex=source(G))\;   
 Q = \{N\}\;
 \While{$Q \neq \emptyset$}{
  N = Pop(Q)\;
  i $\longleftarrow$ LastVertex(N)\;
  \For{all edges $(i,j) \in E$ }{
  M = CreateSubPath(PathLength=$\infty$, ArriveTime=$\infty$, Binf=Binf(N), LastVertex=j)\;
  ArriveTime(M) = max( ArriveTime(N) + Dist[i ; j] ; StartTime(TS(j)) )\;
  \If{ArriveTime(M) $\leq$ FinishTime(TS(j))}{
        PathLength(M) = PathLength(N) + Dist[i ; j]\;
		Binf(M) = PathLength(M) + $\sum_{p= NV(j)}^{ NV_{max}-1} \text{Dist}_{min}$[p ; p+1]\;
		\If{ Binf(M) $\leq$  BKS}{
				\If{ M is not dominated in Q }{
				Insert(M, Q)\;
				\If{NV(j) = $NV_{max}$ and PathLength(M) $<$ BKS}{
						BKS = PathLength(M)
					}
				}
			}
    }
  }
 }
 return BKS\;
\end{algorithm}

The shortest path computation in the DAG $G$ of the instance to output the optimal tour length is described in Algorithm~\ref{algo}.
It finds the shortest path from the source of the graph, $source(G)$, to its sink, $sink(G)$, which constitutes a Hamiltonian circuit starting and ending at the depot. Each sub-path $N$ has the 4 attributes: the last vertex visited, $LastVertex$, the optimal path length from $source(G)$ to this vertex, $PathLength$, the arrival time to this vertex, $ArriveTime$, and a lower bound value $Binf$. 
The main data structure of Algorithm~\ref{algo} is a list of list of sub-paths, indexed by $LastVertex$ and sorted by increasing $PathLength$. It is denoted  $Q$.
The algorithm includes also a mechanism of dominance and cutting by lower bounds, which allows us to not explore all dominated sub-paths and their extensions.
A sub-path $M$ with $LastVertex(M)=j$ is said to be dominated in case it exists a sub-path $L$ in $Q$ such that $LastVertex(L) = j$, $PathLength(L) \leq PathLength(M)$ and $ArriveTime(L) \leq ArriveTime(M)$.
The lower bound of a sub-path $M$, $Binf(M)$, is calculated by completing $PathLength(M)$ with the lowest cost value from $LastVertex$ to $sink(G)$, without connectivity considerations.
To do so, a minimal distance between each successive graph levels has to be previously computed.
The minimal distance between a level $p$ and its next level $p+1$ is given by the function $\text{Dist}_{min}\text{[p ; p+1]} = \min_{NV(k) = p,\; NV(m) = p+1} \text{Dist[k ; m]}$.
The following functions are also used in Algorithm~\ref{algo}:  
\begin{itemize}
    \item[-] $NV(v)$: the level of the vertex $v$ in the DAG $G$. The maximal level of $G$ is denoted $NV_{max}= \max_{v \in V} NV(v)$, which corresponds to the level of $sink(G)$.
    \item[-] TS(v): the time slot of the vertex $v$ in $G$.
    \item[-] StartTime(ts): the start time of a time slot ts. 
    \item[-] EndTime(ts): the end time of a time slot ts 
    \item[-] GreedyPath(G): the greedy growing path from $source(G)$ to $sink(G)$. 
\end{itemize}

\section{Computational results}\label{sec:expe}
In this section, we report experimental results that examine the quality of the proposed asymptotic approximation on a number of benchmarks of the TSP-TW altered in this occasion for time slots considerations.

\subsection{Benchmark generation}
To generate instances of the TSP-TS, we make use of the benchmarks of the TSP-TW listed in Table~\ref{tab:ben}.
The table encompasses benchmark datasets that are often used in the literature for evaluating solving algorithms of the TSP-TW\footnote{These datasets can be downloaded from the websites:  \url{https://myweb.uiowa.edu/bthoa/tsptwbenchmarkdatasets.htm}, and \url{http://lopez-ibanez.eu/tsptw-instances} 
[Accessed 2022-05-29].}.
The column $n-1$ designates the number of customers to visit, and $w$ is the maximum width used in the generation of the lengths of the time windows. 
Our goal is to take the physical distribution of the points to visit and the depot from these instances, and change their time windows configuration to time slots.

\begin{sidewaystable}
    \centering
    \begin{tabular}{|c|c|c|c|c|c|c|c|c|c|c|c|}
        
        \multicolumn{1}{c}{1} & \multicolumn{1}{c}{2} & \multicolumn{1}{c}{3} & \multicolumn{1}{c}{4} & \multicolumn{1}{c}{5} & \multicolumn{2}{c}{6} & \multicolumn{2}{c}{7} & \multicolumn{2}{c}{8} & \multicolumn{1}{c}{9}\\ 
        \hline
         name & reference & based &$n-1$ & $w$ & \multicolumn{2}{c|}{${\cal P}$} & \multicolumn{2}{c|}{TW} & \multicolumn{2}{c|}{TW} & year\\ 
         &&  on & & & \multicolumn{2}{c|}{positions} &\multicolumn{2}{c|}{positions}&\multicolumn{2}{c|}{$w$} &  \\ \hline
         $LA$ & Langevin  & - & $20, 40, 60$ & $2\times20, 2\times30$ & rd & U[100] x U[100] & co & - & rd & U & 1993  \\ 
         & et al.  \cite{langevin1993two} && & $2\times40$ & & & & & & &  \\ \hline
         $DU$ & Dumas  & $M$\cite{langevin1993two} & $20, 40, 60,80$ & $20, 40, 60,$ & rd & U[50] x U[50] & co & - & rd & U & 1995  \\ 
         & et al. \cite{dumas1995optimal} && $100, 150, 200$ & $80, 100$ & & & & & & & \\ \hline
         $PB$ & Potvin and  & $E$\cite{solomon1987algorithms} & 3 to 44 & 25, 50, 75, & rd & U[100] x U[100] & rd 
         & U & rd & N & 1996 \\
         & Bengio \cite{potvin1996vehicle} && & 100 & cl & - & co 
         & - & &  &  \\ \hline 
         $GD$ & Gendreau & $M$\cite{dumas1995optimal} & $20, 40,60$ & $80, 100, 120, 140,$ & rd & U[50] x U[50] & co & - & fx & - & 1998 \\
         &et al. \cite{gendreau1998generalized} & & $80,100$ & $160, 180, 200$ & & & & & &  &  \\ \hline
         $PS$ & Pesant \cite{pesant1998exact} & $E$\cite{solomon1987algorithms}& 19 to 44 & $25, 50, 75,$ & rd & U[100] x U[100] & rd & U & rd & N & 1998 \\ 
         & & &  & 100 & cl & - & co & - & & &  \\ \hline
         $OT$ & Ohlmann and & $E$\cite{dumas1995optimal}& 150 and& 
         $120, 140, 160$
         & rd & U[50] x U[50] & co & - & rd & U & 2007\\
         &  Thomas \cite{ohlmann2007compressed} & & 200 & 
         & & & & & &  & \\ \hline
    \end{tabular}
    \caption{Description of TSP-TW benchmarks.
    The 6th, 7th, and 8th columns report the type of the methodology used for the generation of the positions of ${\cal P}$, the position of the time windows, and their lengths respectively. `rd' designates random, `co' constructed, `cl' clustered, and `fx' fixed. 
    In case of `rd',  we mention the used distribution: U (Uniform), N (Normal). The 3rd column indicates the original benchmark used for construction, whether it is similar in terms of methodology $M$, or as an extension $E$ of a benchmark. For PB and PS datasets, the position of the TW are randomly generated for points with random positions, and are constructed for clustered points. 
    For DU dataset, $w=100$, resp. $80$ and $60$, is not considered for $(n-1)\geq 80$, resp $(n-1)\geq 100$ and $(n-1) = 200$.
    For GD dataset, $w=80$, resp $100$, $180$ and $200$, is considered only for $(n-1)=100$, resp. $(n-1)\geq 80$, $(n-1)\leq 80$, and $(n-1)\leq 80$. 
    For OT dataset, $w=160$ is not considered for $(n-1)=200$.
    }\label{tab:ben}
\end{sidewaystable}

To generate time slots, we rely on the line segment partitioning procedures of \cite{borgos2000partitioning}, especially the method of the repulsion between the partitioning points.
Given a time horizon $h$, to produce $m$ time slots, $m-1$ points have to be placed in the interval $[0,h]$.
The repulsion method is based on randomly uniformly generating more points than needed, precisely $p\times m-1$ points, with $p>1$, and then retaining the points $p\times l$ where $l=1, .., m-1$. The points represent the bounds of the time slots, in addition to $0$ and $h$.
Figure~\ref{fig:timeslot_method} shows an example of this distribution.

\begin{figure}[h!]
    \centering
\begin{tikzpicture}[mydrawstyle/.style={draw=black, very thick}, x=1mm, y=1mm, z=1mm]
  \draw[mydrawstyle, ->](-2,30)--(66,30) node at (-6,30)[left]{$ts=1\;\;\;\;\;\;\;\;\;\;\;\;\;\;$};
  \draw[mydrawstyle](0,28)--(0,32) node[above=3]{$0$};
  \draw[mydrawstyle](10,28)--(10,32);
  \draw[mydrawstyle](20,28)--(20,32);
  \draw[mydrawstyle](30,28)--(30,32);
  \draw[mydrawstyle](40,28)--(40,32);
  \draw[mydrawstyle](50,28)--(50,32);
  \draw[mydrawstyle](60,28)--(60,32) node[above=3]{$h$};
\end{tikzpicture}

\begin{tikzpicture}[mydrawstyle/.style={draw=black, very thick}, x=1mm, y=1mm, z=1mm]
  \draw[mydrawstyle, ->](-2,30)--(66,30) node at (-6,30)[left]{$ts=2$ ($p=20$)}; 
  \draw[mydrawstyle](0,28)--(0,32);
  \draw[mydrawstyle](4.8,28)--(4.8,32);
  \draw[mydrawstyle](25.2,28)--(25.2,32);
  \draw[mydrawstyle](48.9,28)--(48.9,32);
  \draw[mydrawstyle](52.2,28)--(52.2,32);
  \draw[mydrawstyle](55.7,28)--(55.7,32);
  \draw[mydrawstyle](60,28)--(60,32);
\end{tikzpicture}

\begin{tikzpicture}[mydrawstyle/.style={draw=black, very thick}, x=1mm, y=1mm, z=1mm]
  \draw[mydrawstyle, ->](-2,30)--(66,30) node at (-6,30)[left]{$ts=3$ ($p=50$)}; 
  \draw[mydrawstyle](0,28)--(0,32);
  \draw[mydrawstyle](16.4,28)--(16.4,32);
  \draw[mydrawstyle](29.3,28)--(29.3,32);
  \draw[mydrawstyle](32.4,28)--(32.4,32);
  \draw[mydrawstyle](40.5,28)--(40.5,32);
  \draw[mydrawstyle](46.9,28)--(46.9,32);
  \draw[mydrawstyle](60,28)--(60,32);
\end{tikzpicture}

\begin{tikzpicture}[mydrawstyle/.style={draw=black, very thick}, x=1mm, y=1mm, z=1mm]
  \draw[mydrawstyle, ->](-2,30)--(66,30) node at (-6,30)[left]{$ts=4$ ($p=100$)}; 
  \draw[mydrawstyle](0,28)--(0,32);
  \draw[mydrawstyle](16.9,28)--(16.9,32);
  \draw[mydrawstyle](21.7,28)--(21.7,32);
  \draw[mydrawstyle](29.8,28)--(29.8,32);
  \draw[mydrawstyle](36.4,28)--(36.4,32);
  \draw[mydrawstyle](49.0,28)--(49,32);
  \draw[mydrawstyle](60,28)--(60,32);
\end{tikzpicture}

\begin{tikzpicture}[mydrawstyle/.style={draw=black, very thick}, x=1mm, y=1mm, z=1mm]
  \draw[mydrawstyle, ->](-2,30)--(66,30) node at (-6,30)[left]{$ts=5$ ($p=150$)}; 
  \draw[mydrawstyle](0,28)--(0,32);
  \draw[mydrawstyle](3.9,28)--(3.9,32);
  \draw[mydrawstyle](23.7,28)--(23.7,32);
  \draw[mydrawstyle](31.2,28)--(31.2,32);
  \draw[mydrawstyle](49.7,28)--(49.7,32);
  \draw[mydrawstyle](56.5,28)--(56.5,32);
  \draw[mydrawstyle](60,28)--(60,32);
\end{tikzpicture}
    \caption{An instance of time slot distribution for identical time slots ($ts=1$), and using repulsion based partitioning method \cite{borgos2000partitioning} for $p=20, 50, 100, 150$ ($ts\in [|2,5|]$).}
    \label{fig:timeslot_method}
\end{figure}
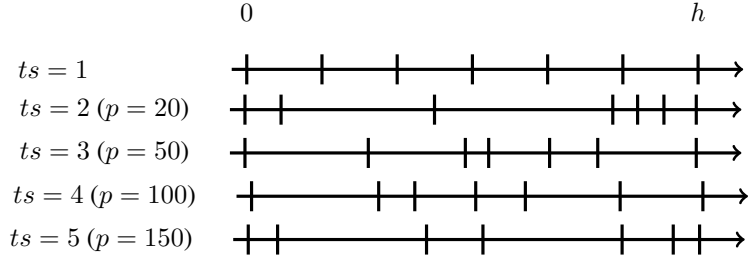

The constant $\beta^{tsp}$ is taken to be the estimates of Lei {\em et al.} \cite{lei2016solving} for values of $n \leq 90$, and to the estimates of Applegate {\em et al.} for $n \geq 100$ \cite{applegate2006traveling}, as shown in Table~\ref{tab:beta}.

The paper results are presented for DU and GD benchmarks, since they have the same base surface.
We take the time horizon to be $15$ times the diameter of the square surface, {\em i.e.} $h=1060.66$.
For each $n \in \{21, 41, 61, 81, 101\}$, five instances of DU and GD are chosen at random, thus a total of 25 initial instances. 
For each $m \in \{1, 2, 3, 4, 5, 6, 7, 8, 9, 10\}$, we generate $5$ time slot partitions $ts$: Identical time slots ($ts=1$), and time slots using the repulsion based partitioning method with $p=20, 50, 100, 150$ ($2 \leq ts \leq 5$).
For each $n \in \{21, 41, 61, 81, 101\}$, we generate $15$ distributions of the $(n-1)$ clients on $[0,h]$, $5$ uniformly (mode zero), $5$ with one mode, and $5$ with two modes. 
For one mode, we use one Normal distribution: $\mathcal{N}(\mu=h/2 , \sigma=h/4)$, and for two modes, we use a mixture of two Normal distributions: $\mathcal{N}(\mu=h/4 , \sigma=h/4)$ and $\mathcal{N}(\mu=3\: h/4 , \sigma=h/4)$.  
The uniform distribution allows to assign clients to time slots proportionally to their lengths.
The distributions with mode(s) try to replicate realistic cases of repartitions of clients.
Urban traffic flow have often two distinct peaks in terms of traffic volumes, occurring at the morning and evening.
In summary, for each initial TSP-TW instance, we generate 25 different time slot configuration, given for $1 \leq m \leq 10$,  $1 \leq ts \leq 5$, and the $15$ random distributions of clients on the time slots.
A total of $18.750$ instances are generated for benchmarking. 

The experiments are performed on an \emph{Intel(R) Core(TM) i7-8750H CPU @ 2.20GHz} processor with 32 GB RAM memory machine.

\subsection{Performance measures}

To evaluate the accuracy of the feasibility condition, we report type I (false positive - FP) and type II (false negative - FN) errors for the following null hypothesis $H_0$: the TSP-TS instance is feasible, by examining the condition (\ref{eq:pareto2}).
Type I and type II errors are essential concepts used in the interpretation of the results in statistical hypothesis testing.
In our case, the most serious error is type II error, for which the instance is predicted to be feasible, which it is not in fact. 


To examine the quality of the proposed approximations, we use a quality gap to the actual tour lengths.
This allows us to obtain a precise assessment of how close are the asymptotic approximations to the optimal length of the tours, under the assumptions of {\em Euclidean} distances and a uniform random generation of the locations of points to visit and the depot.
The quality gap of for an instance of TSP-TS is equal to 
\begin{align*}
&G_{n}^{tsp-ts} = (L_{n}^{tsp-ts} - T_{n}^{tsp-ts}) \times 100 /T_{n}^{tsp-ts},
\end{align*}
where $L_{n}^{tsp-ts}$ and $T_{n}^{tsp-ts}$ are resp. the  asymptotic approximation and the actual optimal tour length of the instance problem. 
The absolute gaps $|G_{n}^{tsp-ts}|$ are also reported.

In addition to $L_{n}^{tsp-ts}$ of proposition~\ref{prop4}, called here the distributional approximation, we compute an experimental approximation of the TSP-TS, called the sampling approximation, which takes into account the actual distribution of points to visit to the time slots. In other terms, this approximation is equal to 

\begin{equation}
L_{n}^{tsp-ts-sample} = \beta^{tsp} \sqrt{|\mathcal{R}|}(\sqrt{(1+n_1)} + \sum_{k=2}^{m} \sqrt{n_k}),
\end{equation}
where $n_k$ is the actual number of points assigned to time slot $A_k$, $1\leq k \leq m$.
The quality gap of this approximation can be noted $G_{n}^{tsp-ts-sample}$.
The actual distribution $n_k$ can be equally used to derive a  feasibility condition. An instance is feasible if
\begin{equation}
\label{feasExp}
\beta^{tsp} \sqrt{|\mathcal{R}|n_k} \leq l_k = |A_k|,\;\; \forall k \in [|1,m|].    
\end{equation}

\subsection{Quality of the approximation}
In this subsection, we examine the quality of the approximations $L_{n}^{tsp-ts}$ and $L_{n}^{tsp-ts-sample}$.
The results for the sections 7.3.1 and 7.3.2 are given for the uniform temporal distribution of clients. 
Section 7.3.3 discusses the impact of the modes of the temporal distribution.

\subsubsection{Impact of the number of points $n$}
Figure~\ref{fig:gap1} displays the absolute gaps of the proposed approximations in function of the number of clients $n$.
Knowing the actual distribution of clients on time slots leads consistently to a lower gap compared to using the distributional assumption:  $|G_{n}^{tsp-ts-sample}| \leq |G_{n}^{tsp-ts}|$.
However, the difference between both gaps becomes quite small when the number $n$ gets larger.
For $n=(81, 101)$, we have an average absolute gap of $|G_{n}^{tsp-ts}|=(5.68, 5.07\%)$ and of $|G_{n}^{tsp-ts-sample}|= (4.98, 4.06\%)$.
Both formulas are good quality approximations of the tour lengths when $n$ is large.

\begin{figure}[h]
    \centering
    \includegraphics[width=0.6\textwidth]{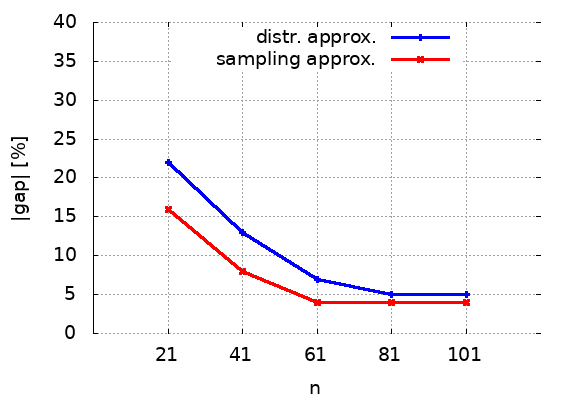}
    \caption{Average of the absolute gaps to the optimal solutions of the distr. approximation $L_{n}^{tsp-ts}$ and the sampling approximation $L_{n}^{tsp-ts-sample}$ for varying values of $n$. }
    \label{fig:gap1}
\end{figure}

\subsubsection{Impact of the time slots configuration $(m,ts)$}
\begin{figure}
    \centering
    \includegraphics[width=0.49\textwidth]{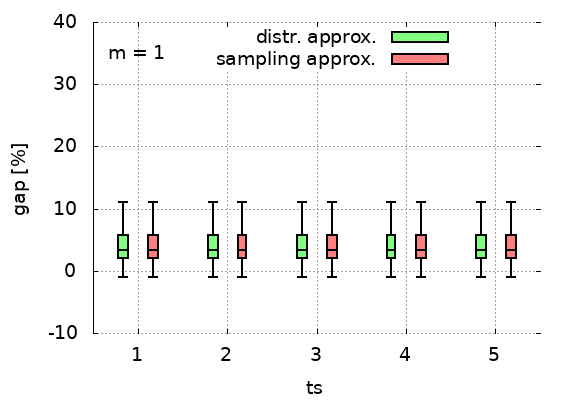}
    \includegraphics[width=0.49\textwidth]{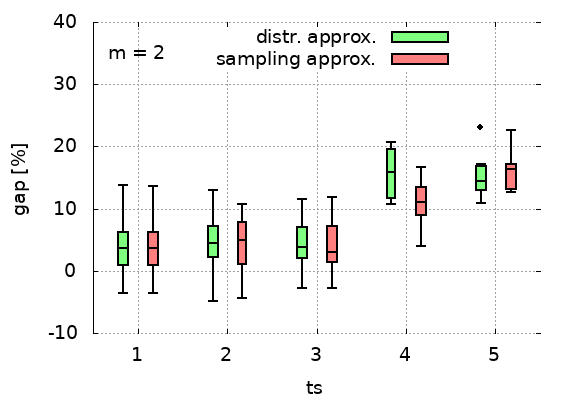}
    \includegraphics[width=0.49\textwidth]{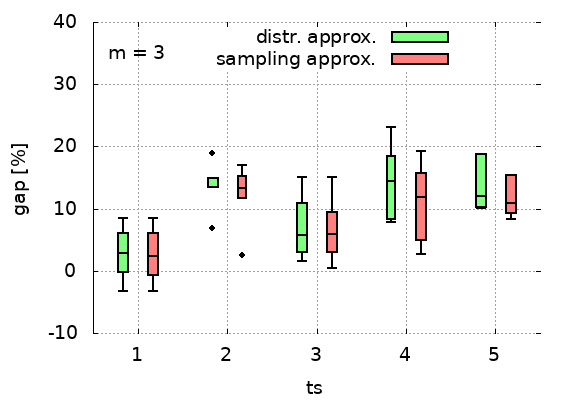}
    \includegraphics[width=0.49\textwidth]{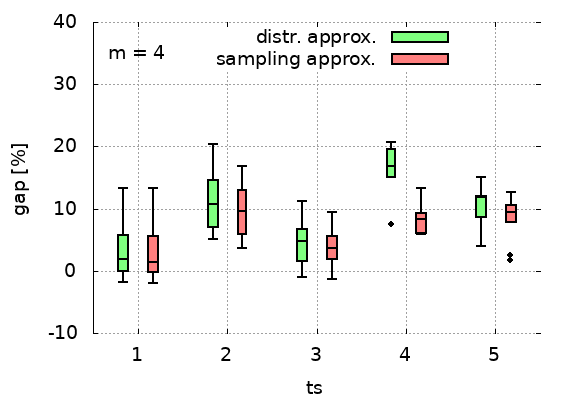}
    \includegraphics[width=0.49\textwidth]{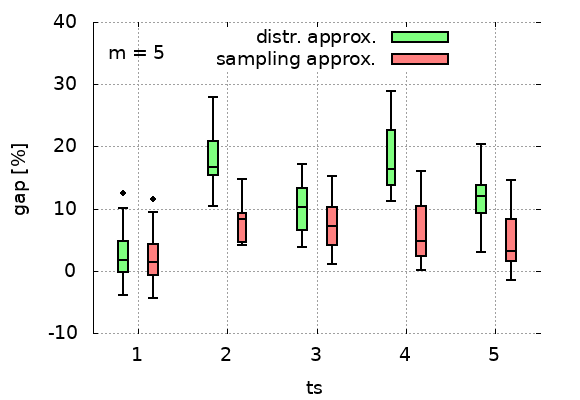}
    \includegraphics[width=0.49\textwidth]{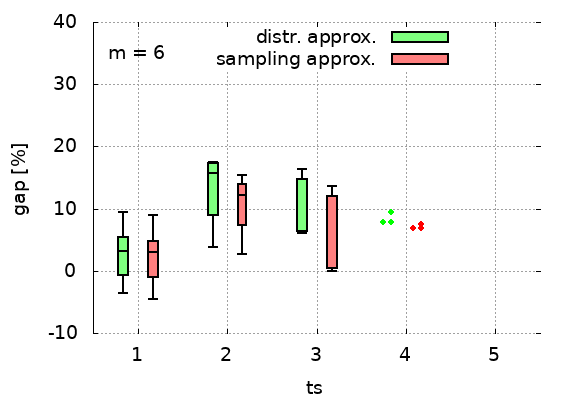}
    \caption{Distribution of gaps $G_{n}^{tsp-ts-sample}$ and $G_{n}^{tsp-ts}$, for $n\in \{81, 101\}$ and $m\in\{1, 2, 3, 4, 5, 6\}$ in function of $ts$.}
    \label{fig:gap2}
\end{figure}

The approximations' gaps for $n\in \{81, 101\}$ are shown in the Figure~\ref{fig:gap2}, when varying the time slot configuration: $1 \leq m \leq 6$ and $1 \leq ts \leq 5$.
Some observations could be made:
\begin{itemize}
\item[-] The experimental approximation $L_{n}^{tsp-ts-sample}$ is consistently better than the one based on the distributional assumption $L_{n}^{tsp-ts}$, except when $m=1$, and the cases $(m,ts=1)$ independently of the value of $m$, wherein both approximations are equivalent.
\item[-] The gaps $G_{n}^{tsp-ts-sample}$ and $G_{n}^{tsp-ts}$ are smaller for identical time slots ($ts=1$) compared to time slot configurations $ts>1$. Among time slots $ts>1$, there is no clear order in terms of the gaps. For instance, when $m=2$, the approximations' gaps get larger for increasing values of $ts$ (increasing of the repulsion parameter $p$), while for instance for $m=4$, the configurations of $ts=2$ have the largest gaps.    
\item[-] Most of the values of gaps of Figure~\ref{fig:gap2} are lower than $10\%$. The medians of the all gaps $(G_{n}^{tsp-ts}, G_{n}^{tsp-ts-sample})$ of the figure are equal to $(4.47,4.13\%)$. Some time slot configurations, such as $(m=2, ts=4)$, $(m=3, ts=2)$, $(m=3, ts=5)$, have gaps overall larger than $10\%$, but the median of the distribution of the gaps is always lower than $20\%$.
\item[-] The effect of number of time slots $m$ is negligible on the distributions of the gaps. 
\end{itemize}

\subsubsection{Impact of the distribution of points}
Figure~\ref{fig:gap3} shows the distribution of gaps in function of the temporal modes.
For identical time slots ($ts = 1$), the difference between the modes is unnoticed, except for a higher number of time slots $m$. 
For the remaining configurations of time slots ($ts>1$), the one mode, which corresponds to one normal distribution of points, tends to have higher gaps than to the two modes and the uniform distribution.
This observation is more pronounced for a large $m$. 
The reason behind this discrepancy is due to the number of time slots, which has a low allocation of clients, for which the related BHH formula

\begin{figure}[h]
    \centering
    \includegraphics[width=0.32\textwidth]{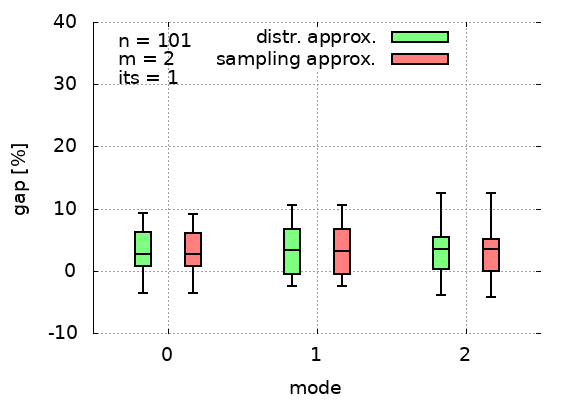}
    \includegraphics[width=0.32\textwidth]{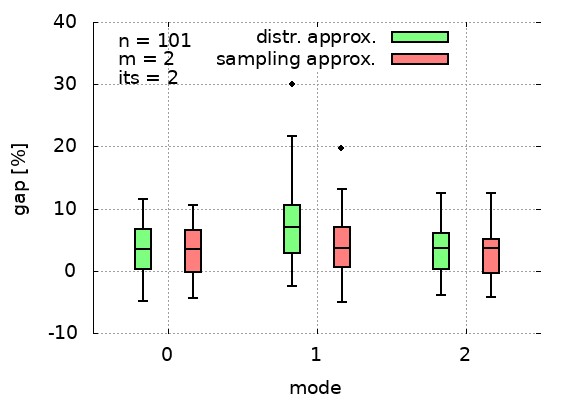}
    \includegraphics[width=0.32\textwidth]{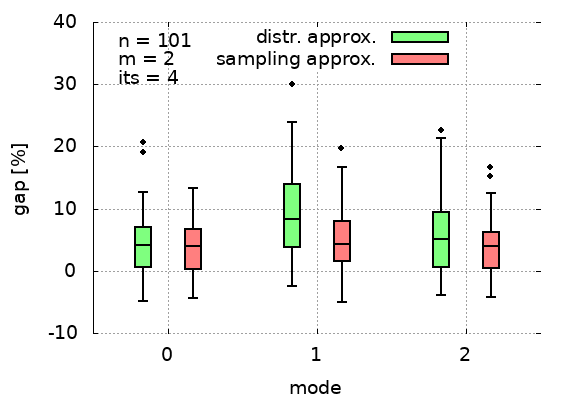}
    \includegraphics[width=0.32\textwidth]{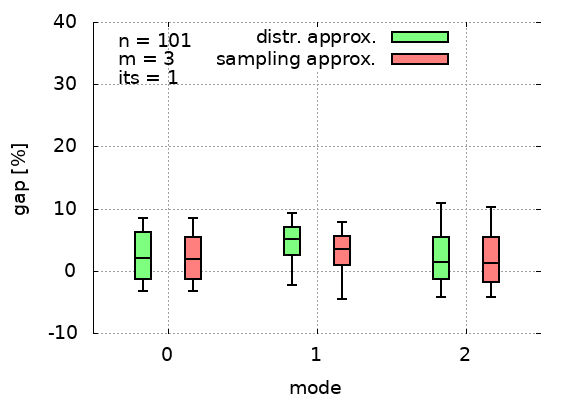}
    \includegraphics[width=0.32\textwidth]{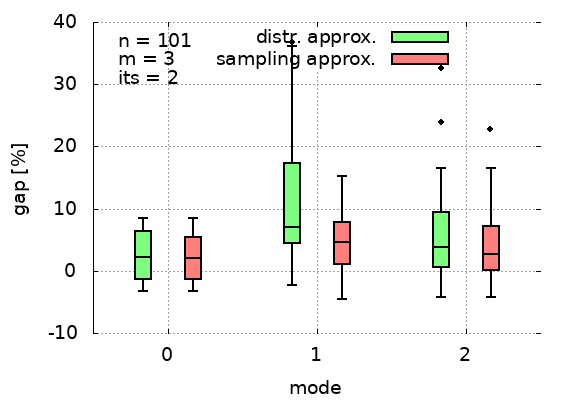}
    \includegraphics[width=0.32\textwidth]{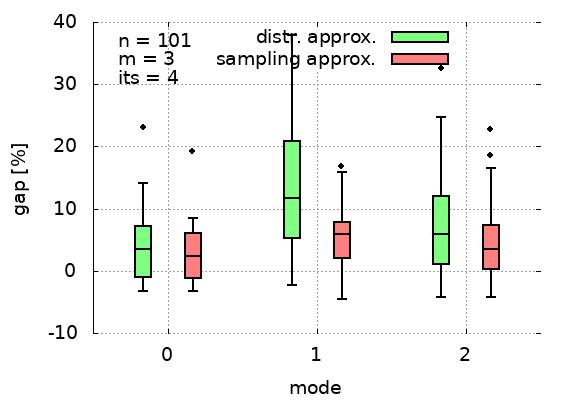}
    \includegraphics[width=0.32\textwidth]{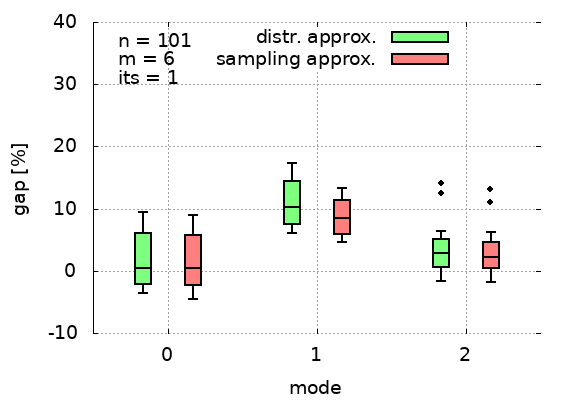}
    \includegraphics[width=0.32\textwidth]{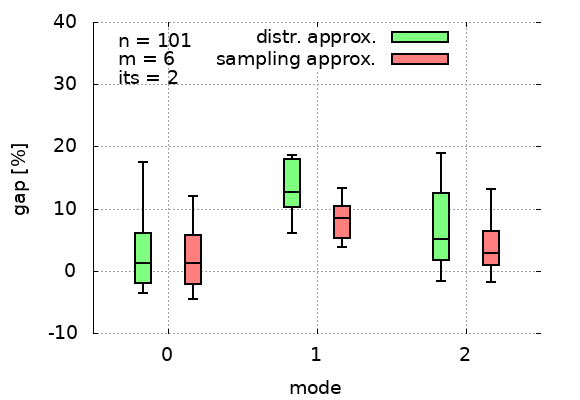}
    \includegraphics[width=0.32\textwidth]{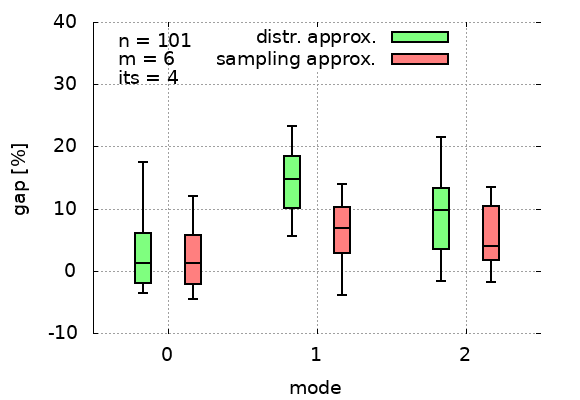}
    \caption{Distribution of gaps $G_{n}^{tsp-ts-sample}$ and $G_{n}^{tsp-ts}$, for $n = 101$, $m \in \{2, 3, 6\}$ (rows) and $ts \in \{1, 2, 4\}$ (columns), in function of the temporal distributional modes.}
    \label{fig:gap3}
\end{figure}

\noindent  behaves poorly. This number is bigger for one mode.
For instance, for $m=6$ and $n \in \{81, 101\}$, the number of time slots with less than 10 customers in our benchmark is equal to $(505, 710, 560)$ for the modes $(0,1,2)$.  

For $n= 101$, the average absolute gap of $(|G_{n}^{tsp-ts}|, |G_{n}^{tsp-ts-sample}|)$ is equal to  $(5.07, 4.06\%)$ for mode 0, $(10.38, 5.46\%)$ for mode 1,  and $(6.18, 4.49\%)$ for mode 2, 
thus an average of $(7.56, 4.76\%)$.

\subsection{Feasibility}
In this subsection, we examine the accuracy of the feasibility condition,  based on the distributional assumption, {\em i.e.} the expression (\ref{eq:pareto2}), or on the actual temporal distribution of points to visit, {\em i.e.} the expression (\ref{feasExp}). 

\subsubsection{Impact of the number of points $n$}
Figure~\ref{fig:error-n} shows the percentage of type I and type II errors in function of $n$ the number of points, for all instances with uniform temporal distribution of customers. 
For an instance to be unfeasible, only one-time slot can be sufficient,  
if the number of points of this time slot cannot be served within its length.
Relying solely on the distributional assumption, {\em i.e.} (\ref{eq:pareto2}), to discover unfeasible instances leads to high errors (the percentage of false negatives) due to this sensitivity.
Knowing the actual distribution of points on the time slot is highly beneficial in this case.
The percentage of FN for $n=61, 81, 101$ is respectively equal to $94.16, 91.02, 91.24\%$ when relying on the distributional formula, and equal to  $2.04, 2.27, 1.05\%$ if the actual distribution is known.
The BHH formula is not accurate when $n$ is low, which could explain the high percentage of the false positive error when using the actual distribution, especially since the formula is applied for each time slot.
However, this percentage decreases for large $n$, thus making the distributional information of the points to visit very useful to know beforehand.

\begin{figure}[h]
    \centering
    \includegraphics[width=0.6\textwidth]{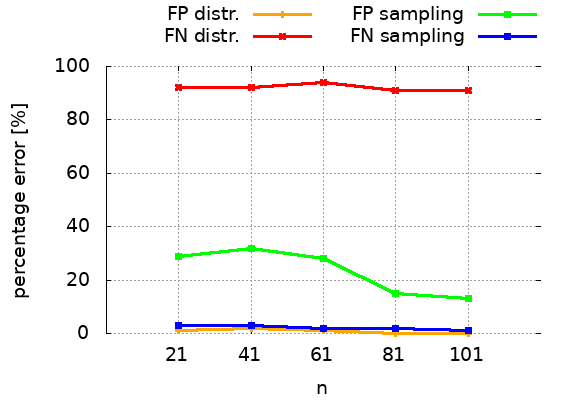}
    \caption{Averages of false positive (FP) and false negative (FN) error percentages among all instances for varying $n$,  $2 \leq m \leq 10$ and $1 \leq ts \leq 5$.}
    \label{fig:error-n}
\end{figure}

\subsubsection{Impact of the time slots configuration $(m,ts)$}
When $n$ is large, the influence of the time slots configuration $(m,ts)$ on feasibility is negligible, expect for the false positive error when relying on the actual temporal distribution.
Figure~\ref{fig:error-m} shows this impact when $n\in \{81, 101\}$.
As $m$ increases the chance to make this error becomes higher since infeasibility can be determined by only one-time slot, as said before. 
If time slots are identical in size ($ts=1$), the percentage of this error is smaller than the other repartitions $ts\in\{2, 3, 4\}$.
The percentage of all the other errors is close to zero, except for the false negative percentage when relying on the distributional assumption, which is consistently high.

\begin{figure}[h]
    \centering
    \includegraphics[width=0.49\textwidth]{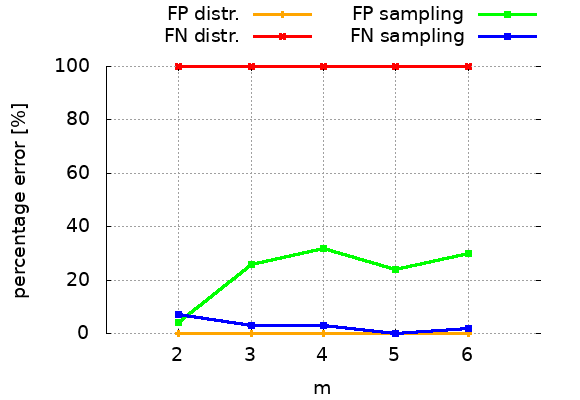}
    \includegraphics[width=0.49\textwidth]{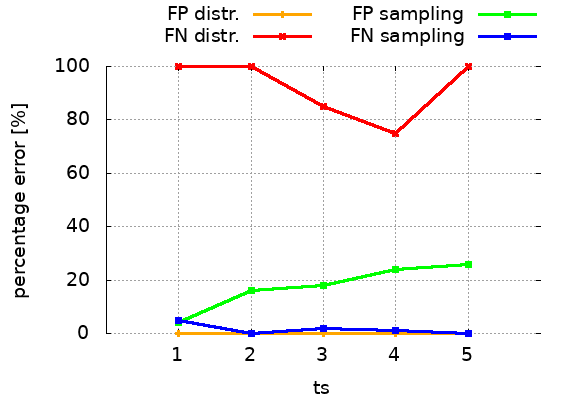}
    \caption{Averages of FP and FN errors for varying $m$ (left plot) and varying $ts$ (right plot), for $n\in \{81, 101\}$. 
    }
    \label{fig:error-m}
\end{figure}

\subsection{The TSP-TW preliminary study}

In this section, we give some preliminary results for approximating the length of TSP-TW instances in order to support the discussion of Section 4.2 with tangible data.

As said before, the upper bounds (\ref{bd:tsptw}) and (\ref{bd:tsptw1}) give poor approximations for TSP-TW in case the time windows are tight, which cover the great majority of the benchmark instances in the literature.
Table~\ref{tab:tsptw} shows the gap of the upper bound  (\ref{bd:tsptw1}) for the original 25 TSP-TW instances used in this experimentation (see Section 7.1).
The average gap is around $514\%$.

To examine the performance of the upper bound (\ref{bd:tsptw1}) on instances with large time windows and densely located in time, we generate new TSP-TW instances given the original $25$ instances. 
The length of the time windows $w$ is taken to be equal to either to a third or a quarter of the time horizon of the instance.
The location in time of the central point of the time windows is governed by a Normal distribution: $\mathcal{N}(\mu=h/2 , \sigma=h/8)$.
For each original instance and time window length $w$, $5$ new instances are created.
The average of the absolute gaps of the upper bound (\ref{bd:tsptw1}) are as follows: $(25.26, 19.34, 37.59, 46.67, 53.33\%)$ for $n=(21, 41, 61, 81, 101)$, which are much better than those of Table~\ref{tab:tsptw}. In this calculation, we did not account for time slots lower than $3\%$ of the time horizon, which can be neglected due to its size.

\begin{table}[H]
    \centering
{\footnotesize\setlength{\tabcolsep}{2pt} 
    \centering
    \begin{tabular}{c c | c c c | c | c c | }
    \hline
    \multicolumn{2}{c|}{} & \multicolumn{3}{c|}{Instance parameters}  & &  \multicolumn{2}{c|}{$L_{n-1}^{tsp-m^*ts}$} \\
    &  & &  & & & &  \\
    \# & Name & Dataset & $n-1$ & $\sqrt{|{\cal R}|}$ & Sol & apx & gap \\ \hline
1 & n20w20.004 & DU & 20 & 100 & 396.0 & 1136.9 & 187.1\\
2 & n20w120.003 & GD & 20 & 100 & 303.0 & 1121.0 & 270.0\\
3 & n20w140.004 & GD & 20 & 100 & 255.0 & 1121.0 & 339.6\\
4 & n20w200.001 & GD & 20 & 100 & 233.0 & 1071.9 & 360.0\\
5 & n20w200.004 & GD & 20 & 100 & 293.0 & 1071.9 & 265.8\\
6 & n40w20.001 & DU & 40 & 100 & 500.0 & 2047.5 & 309.5\\
7 & n40w60.002 & DU & 40 & 100 & 470.0 & 2126.0 & 352.3\\
8 & n40w80.002 & DU & 40 & 100 & 431.0 & 2126.0 & 393.3\\
9 & n40w80.004 & DU & 40 & 100 & 417.0 & 2186.7 & 424.4\\
10 & n40w200.002 & GD & 40 & 100 & 303.0 & 2094.9 & 591.4\\
11 & n60w60.002 & DU & 60 & 100 & 566.0 & 3172.7 & 460.5\\
12 & n60w60.003 & DU & 60 & 100 & 485.0 & 3143.2 & 548.1\\
13 & n60w80.005 & DU & 60 & 100 & 468.0 & 3083.3 & 558.8\\
14 & n60w140.002 & GD & 60 & 100 & 462.0 & 3052.9 & 560.8\\
15 & n60w160.001 & GD & 60 & 100 & 560.0 & 3037.6 & 442.4\\
16 & n80w20.004 & DU & 80 & 100 & 615.0 & 4034.3 & 556.0\\
17 & n80w60.005 & DU & 80 & 100 & 575.0 & 4183.3 & 627.5\\
18 & n80w80.003 & DU & 80 & 100 & 589.0 & 4049.5 & 587.5\\
19 & n80w120.001 & GD & 80 & 100 & 498.0 & 3973.2 & 697.8\\
20 & n80w160.005 & GD & 80 & 100 & 439.0 & 4168.6 & 849.6\\
21 & n100w60.004 & DU & 100 & 100 & 764.0 & 5027.8 & 558.1\\
22 & n100w120.001 & GD & 100 & 100 & 629.0 & 4955.4 & 687.8\\
23 & n100w120.003 & GD & 100 & 100 & 617.0 & 4911.5 & 696.0\\
24 & n100w120.005 & GD & 100 & 100 & 537.0 & 4984.5 & 828.2\\
25 & n100w140.002 & GD & 100 & 100 & 615.0 & 4911.5 & 698.6\\
\hline
 & Mean of gap &  & - &  & -  & -  & 514.04 \\
\hline
    \end{tabular} 
}
    \caption{Gaps of the upper bound (\ref{bd:tsptw1}). The best-known solutions of DU and GD benchmark instances are respectively given by \cite{dumas1995optimal} and \cite{bjordal2020solving}.}
    \label{tab:tsptw}
\end{table}



\section{Conclusions}
We propose an asymptotic approximation of the TSP-TS derived from the well-known TSP approximation for two variants with identical and different time slot lengths.

By means of several numerical experiments, we show that the proposed approximations provide good estimators of the tour length and the instance feasibility, even for a limited number of customers independent of the depot location.

We also show that the direct extension of our approximation does not provide a good approximation to the general case of TSP-TW. We report a gap of $4.76\%$ and $7.56\%$ of the approximation if the temporal distribution of customers on the time slots is known in advance or not\footnote{These values are obtained for $100$ customers and across different temporal modes.}. Dealing with both spatial and temporal distributions for customers under the TSP-TW assumptions requires more investigation.

The BBH approximation is given for any spatial distribution of the customers visited by the TSP. As the TSP-TS adds a temporal dimension to the TSP, we analyzed the impact of spatial and temporal distributions of customer demands through a worst-case study on the base of the maximal entropy principle. We show that our asymptotic approximation can be adapted to some worst-case hypotheses.  
 
We think that our preliminary results on the feasibility of the TSP-TS are valuable material to consider in the case of multiple vehicles, and also for the general case of overlapping time slots, which constitute the next step in our investigation. 
In addition, the results presented on the TSP-TW can be exploited to compute better approximations and bounds. The final objective of this line of research is to propose an accurate closed formula to approximate the Capacitated-VRP and the VRP-TW.




\bibliographystyle{unsrt} 
\bibliography{mybibfile}



\end{document}